\newtheorem{thm}{Theorem}[section]
\theoremstyle{definition}
\newtheorem{df}[thm]{Definition}
\newtheorem*{df*}{Definition}
\theoremstyle{remark}
\newtheorem*{rem*}{Remark}
\numberwithin{equation}{section}
\newcommand{\ci}[1]{_{ {}_{\scriptstyle #1}}}
\newcommand{\ti}[1]{_{\scriptstyle \text{\rm #1}}}
\newcommand{\ddoto}{\"{o}}
\newcommand{\ddota}{\"{a}}
\newcommand{\cD}{\mathcal{D}}
\newcommand{\cK}{\mathcal{K}}
\newcommand{\e}{\varepsilon}
\newcommand{\R}{\mathbb{R}}
\newcommand{\ch}{\operatorname{ch}}
\newcommand{\1}{\mathbf{1}}
\newcommand{\cz}{Calder\'{o}n--Zygmund\ }
\newcommand{\La}{\langle }
\newcommand{\Ra}{\rangle }
\newcommand{\fdot}{\,\cdot\,}
\newcommand{\bfD}{\boldsymbol{\mathcal{D}}}
\def\cyr{\fontencoding{OT2}\fontfamily{wncyr}\selectfont}
\DeclareTextFontCommand{\textcyr}{\cyr}
\newcommand{\mathd}{\mathrm{d}}
\newcommand{\tmop}[1]{\ensuremath{\operatorname{#1}}}
\newenvironment{entry}
{\begin{list}{X}%
		{%
			\setlength{\labelwidth}{55pt}%
			\setlength{\leftmargin}{\labelwidth}%\labelsep}%
			\addtolength{\leftmargin}{\labelsep}%
			\setlength{\itemsep}{.4pc}
	}%
}%
{\end{list}}
\newcounter{vremennyj}
\begin{document}

\title{Dyadic lower little BMO estimates }

\author{K. Domelevo, S. Kakaroumpas, S. Petermichl, O. Soler i Gibert}

\thanks{The authors are partially supported by ERC project CHRiSHarMa no.~DLV-682402 and the Alexander von Humboldt Stiftung.}

\maketitle

\begin{abstract}
  We characterize dyadic little BMO via the boundedness of the tensor
  commutator with a single well chosen dyadic shift. It is shown that several
  proof strategies work for this problem, both in the unweighted case as well as with Bloom weights. Moreover, we address the flexibility of one of our methods.\\
  \textsc{2010 MSC Primary}: 42B35. \textsc{Secondary}: 42B20.\\
  \textsc{Keywords:} Little BMO, dyadic operators, norm estimates for commutators.
\end{abstract}

\section*{Notation}

\begin{entry}

\item[$\1\ci{E}$] characteristic function of a set $E$;

\item[$\mathd x$] integration with respect to Lebesgue measure; 

\item[$|E|$] $d$-dimensional Lebesgue measure of a measurable set $E\subseteq\R^d$;

\item[$\La f\Ra\ci{E}$] average with respect to Lebesgue measure, $\La f\Ra\ci{E}:=\frac{1}{|E|}\int_{E}f(x)\mathd x$;

\item[$L^{p}(w)$] weighted Lebesgue space, $\|f\|\ci{L^p(w)}^p := \int_{\R^d}|f(x)|^p w(x) \mathd x$; 

\item[$(f,g)$] usual $L^2$-pairing, $(f,g) := \int f(x) \overline{g(x)} \mathd x$;

\item[$w(E)$] Lebesgue integral of a weight $w$ over a set $E$, $w(E):=\int_{E}w(x)\mathd x$;

\item[$p'$] H\"{o}lder conjugate exponent to $p$, $1/p+1/p'=1$; 

\item[$\cD$] family of all dyadic intervals in $\R$;

\item[$\cD(E)$] family of all dyadic intervals $I\in\cD$ contained in a subset $E$ of $\R$;

\item[$\bfD$] family of all dyadic rectangles in $\R^2$;

\item[$\bfD(E)$] family of all dyadic rectangles $R\in\bfD$ contained in a subset $E$ of $\R^2$;

\item[$I_{-},\,I_{+}$] left, right respectively half of an interval $I\in\cD$;

\item[$I'$] sibling in $\cD$ of an interval $I\in\cD$;

\item[$\hat{I}$] parent in $\cD$ of an interval $I\in\cD$;

\item[$h\ci{I}$] $L^{2}$-normalized (cancellative) Haar function for an interval $I\in\cD$, $h\ci{I}:=\frac{\1\ci{I_{+}}-\1\ci{I_{-}}}{\sqrt{|I|}}$;

\item[$f\ci{I}$] Haar coefficient of a function $f\in L^1\ti{loc}(\R)$, $f\ci{I}:=(f,h\ci{I})$, $I\in\cD$;

\item[$h\ci{R}$] $L^2$-normalized (bicancellative) Haar function for a rectangle $R\in\bfD$, $h\ci{R}:= h\ci{I}\otimes h\ci{J}$, where $R = I \times J$;

\item[$f\ci{R}$] usual (biparameter) Haar coefficient of a function $f\in L^1\ti{loc}(\R^2)$, $f\ci{R} :=(f,h\ci{R})$, $R\in\bfD$.

\end{entry}

\section{Introduction}

Let us denote by $\{ h\ci{I} : I \in \mathcal{D} \}$ the Haar base on
$\mathbb{R}$ and let $S$ be the operator densely defined by
\[ h\ci{I_-} \mapsto - h\ci{I_+} \text{ and } h\ci{I_+} \mapsto  h\ci{I_-} . \]
This shift is different from the classical one in \cite{P}. It is time faithful and has other nice properties, in particular it is
an excellent model for the Hilbert transform.

We will work in two parameter space $L^2 (\mathbb{R}^2)$ and we denote by
$S_i$ the shift operator acting in variable $i$. In this note, we mainly
consider the commutator with the tensor product $S\otimes S=S_1S_2$ as follows:
\[ C_b = [S_1 S_2, b] . \]
It is bounded in $L^2$ if and only if the symbol $b$ is in dyadic little BMO.
More precisely,

\begin{thm}
  There holds
  \[ \| b \|_{\tmop{bmo}} \lesssim \| C_b \|\ci{L^2 \rightarrow L^2} \lesssim
     \| b \|_{\tmop{bmo}} \]
  with constants independent of the symbol. The lower norm estimate means that
  \[ \exists C > 0 : \forall b \in \tmop{bmo} \, \exists f \in L^2 : \| C_b f\|\ci{L^2} \geqslant C \| b \|_{\tmop{bmo}} \| f \|\ci{L^2}, \]
  while the upper estimate means, as usual, that
  \[ \exists C : \forall b \in \tmop{bmo} \,\forall f \in L^2 : \| C_b f\|\ci{L^2} \leqslant C \| b \|_{\tmop{bmo}} \| f \|\ci{L^2} . \]
\end{thm}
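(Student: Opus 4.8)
The plan is to prove the two inequalities separately, using the tensor structure of $S_1S_2$ to reduce everything to one-dimensional facts about the shift $S$ and to the definition of $\tmop{bmo}$ as the space of symbols which are uniformly in dyadic BMO in each variable separately (with the other variable frozen).

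For the \textbf{upper estimate} $\|C_b\|\ci{L^2\to L^2}\lesssim\|b\|_{\tmop{bmo}}$, I would write $S_1S_2 = S_1\circ S_2$ and telescope the commutator:
\[ [S_1S_2, b] = S_1 [S_2, b] + [S_1, b] S_2. \]
Since $S_1$ and $S_2$ are bounded (indeed isometries up to the harmless non-cancellative part) on $L^2(\R^2)$, it suffices to bound $[S_i, b]$ on $L^2(\R^2)$ by $\|b\|_{\tmop{bmo}}$. For $i=2$, one freezes the first variable $x_1$; then $[S_2, b(x_1,\cdot)]$ is a one-parameter dyadic commutator with a symbol of BMO norm $\le\|b\|_{\tmop{bmo}}$ uniformly in $x_1$, so the one-dimensional bound $\|[S,\beta]\|\ci{L^2(\R)\to L^2(\R)}\lesssim\|\beta\|\ci{\BMOd}$ applies, and integrating in $x_1$ gives the $L^2(\R^2)$ bound. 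The one-dimensional commutator bound itself follows from the standard paraproduct decomposition of $[S,\beta]$ into finitely many dyadic paraproducts and shift-type operators with symbol $\beta$, each of which is $L^2$-bounded precisely when $\beta\in\BMOd$ (Petermichl-type arguments, cf.\ \cite{P}).

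For the \textbf{lower estimate}, the point is the quantifier structure: given $b\in\tmop{bmo}$, I must produce a single $f\in L^2(\R^2)$ testing $\|C_b f\|\ci{L^2}\gtrsim\|b\|_{\tmop{bmo}}\|f\|\ci{L^2}$. By definition of $\|b\|_{\tmop{bmo}}$ there is a dyadic rectangle $R=I\times J$ (say, with $\|b\|_{\tmop{bmo}}$ essentially attained by oscillation in the first variable over $I$, uniformly for a positive-measure set of $x_2\in J$, after normalizing) on which $b$ has large mean oscillation. The natural test function is a suitably modified Haar-type function adapted to $R$, e.g.\ $f = h\ci{I}\otimes g\ci{J}$ or $f=\1\ci{I_\pm}\otimes g\ci{J}$ with $g\ci{J}$ chosen so that $S_2$ acts cleanly; one computes $C_b f = S_1S_2(bf) - b\, S_1S_2 f$ and, projecting onto a well-chosen Haar function (or onto $\1\ci{R'}$ for a sibling rectangle $R'$), extracts exactly the oscillation $\langle b\rangle\ci{I_+\times J}-\langle b\rangle\ci{I_-\times J}$ type quantity that lower-bounds $\|b\|_{\tmop{bmo}}$, while $\|f\|\ci{L^2}\sim|R|^{1/2}$. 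Because $S$ sends $h\ci{I_\pm}$ to $\mp h\ci{I_\mp}$ with no loss, the two-sided "swap" structure of $S_1S_2$ makes the cross terms combine rather than cancel, which is what makes a single test function work.

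The main obstacle I expect is the lower bound, specifically making the test function genuinely one function rather than a family: one must handle the fact that the supremum defining $\|b\|_{\tmop{bmo}}$ need not be attained, so one works with a near-extremal rectangle $R$ up to a factor $(1+\e)$ and shows the constant $C$ can be taken independent of $b$ and of that choice. A secondary subtlety is bookkeeping the non-cancellative parts of $S_i$ (the images $S h\ci{I}$ of Haar functions that are not themselves among the Haar basis but are linear combinations), which must be tracked carefully both in the paraproduct decomposition for the upper bound and in the explicit computation of $C_b f$ for the lower bound; these are routine but error-prone. The remainder of the argument — boundedness of dyadic paraproducts with $\BMOd$ symbol, Cauchy--Schwarz on Haar coefficients — is standard and I would cite it rather than reproduce it.
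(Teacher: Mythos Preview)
Your upper-bound sketch is fine and matches what the paper does (it simply cites \cite{OP}, \cite{HPW} for that direction): telescoping $[S_1S_2,b]=S_1[S_2,b]+[S_1,b]S_2$ and reducing to the one-parameter dyadic commutator bound via freezing a variable is exactly the standard route.

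The lower-bound proposal, however, has a real gap. Projecting $C_b f$ onto a single Haar function or onto $\1_{R'}$ for one sibling rectangle extracts a quantity of the type $|\langle b\rangle\ci{I_+\times J}-\langle b\rangle\ci{I_-\times J}|\sim |b\ci{I\times J}|/|I\times J|^{1/2}$, i.e.\ a \emph{single} Haar coefficient. The supremum of such quantities over all rectangles is \emph{not} comparable to $\|b\|_{\tmop{bmo}}$: for any $N$ one can build a step function $b$ with $|b_I|/\sqrt{|I|}\le 1$ for every $I$ but $\|b\|\ci{\tmop{BMO}}\gtrsim\sqrt{N}$. So picking a near-extremal rectangle for the $\tmop{bmo}$ norm and then reading off one Haar difference does not recover the full oscillation; the argument as written would only prove $\sup_R|b_R|/\sqrt{|R|}\lesssim\|C_b\|$, which is strictly weaker than the claim.

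What the paper does instead is test on $f=\1_R$ itself and \emph{not} project onto a single Haar mode. The key observations are that $S_1S_2(\1_R)$ is supported entirely outside $\check R:=\hat R_1\times\R\cup\R\times\hat R_2$, and that the part of $b$ built from Haar functions over rectangles strictly containing $R$ is constant on $R$; consequently $\|[S_1S_2,b]\1_R\|^2\ci{L^2(\check R)}=\|S_1S_2((b-\langle b\rangle\ci{R})\1_R)\|^2\ci{L^2}=\int_R|b-\langle b\rangle\ci{R}|^2$, the last equality because $S_1S_2$ is an $L^2$ isometry. This recovers the \emph{entire} mean oscillation over $R$ in one shot, which is what you need.

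A side remark: your worry about ``non-cancellative parts of $S_i$'' and images $Sh_I$ that are ``not themselves among the Haar basis'' is based on a misreading of the shift. Here $S$ is defined by $h_{I_-}\mapsto -h_{I_+}$, $h_{I_+}\mapsto h_{I_-}$, so $S$ permutes the Haar basis up to signs; it is a genuine unitary with $S^2=-\mathrm{Id}$, and there is nothing non-cancellative to track. (You may be thinking of the different shift in \cite{P}.) This actually simplifies both the upper-bound paraproduct bookkeeping and, crucially, is what makes the isometry step in the paper's lower bound work cleanly.
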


The theorem holds also with exponents $1<p<\infty$ and in the Bloom setting. Moreover, we show these estimates hold for a certain class of dyadic shifts.

We will provide two proofs of the lower bounds for the commutator with $S_1S_2$ and refer for example to \cite{OP} and \cite{HPW}
for the upper bound. One proof strategy uses an explicit calculation of the
kernel and a modification of the argument of \cite{CRW} and another passes via a
direct testing on a part of the symbol.

This result is not a surprise, it is well known for the Hilbert transform,
where the most elegant argument that is known to us uses Toeplitz operators \cite{FS} and the
elementary characterization of their boundedness to deduce the little BMO
lower estimate from the one parameter result, Nehari's theorem \cite{N}. A real
analytic proof relying on the explicit BMO expression can be found in \cite{OP},
extending a one parameter real variable argument brought forward by \cite{CRW}. As it turns out and as is well known to experts, the argument by
\cite{CRW} relying on the kernel expression can give lower commutator estimates in various settings. It is somewhat surprising, that it can be used for some dyadic operators as well, because the original argument relied heavily on the particular form of the kernels of Hilbert or Riesz transforms and an identity for spherical harmonics. 

Of independent interest is a direct argument by testing on the symbol. It also extends trivially to the rectangular BMO case of the
iterated commutator. Notice that for the rectangular BMO norm we do not have
John--Nirenberg inequalities \cite{BP}, therefore the exponent 2 in the definition
is highly relevant. This case was treated by \cite{FS} for the Hilbert transform
also via direct testing.

Holmes, Treil and Volberg \cite{HTV} have proved a similar result for a different dyadic shift. Blasco and Pott \cite{BP} have an interesting result in the product BMO setting, requiring boundedness of a commutator with a large class of dyadic multiplier operators.

Our proof strategies extend to the weighted Bloom case, in which case we demonstrate lower estimates and commutator charaterizations as well. An interesting result in the Bloom product BMO setting can be found in \cite{KS}.

Our arguments inspired by \cite{CRW} are somewhat flexible and we give certain criteria under which we have a lower bound for a shift operator. These include the one considered in \cite{HTV} and our $S_1S_2$ in this note, showing that both these groups have selected their operators wisely.

\begin{df}
  A function $b (x)$ is in the dyadic BMO space if
  \[ \| b \|\ci{1, \tmop{BMO}} = \sup_I \frac{1}{| I |} \int_I | b (x) -
     \langle b \rangle\ci{I} | \mathd x \]
  is finite. The supremum runs over dyadic intervals.
  
  Since John--Nirenberg holds for this space, we may define the equivalent norm for $1 < p < \infty$
  \[ \| b \|^p\ci{p, \tmop{BMO}} = \sup_I \frac{1}{| I |} \int_I | b (x) -
     \langle b \rangle\ci{I} |^p \mathd x. \]
\end{df}

\begin{df}
  A function $b (x_1, x_2)$ is in the dyadic little BMO space if
  \[ \| b \|\ci{1, \tmop{bmo}} = \sup_R \frac{1}{| R |} \int_R | b (x_1, x_2) -
     \langle b \rangle\ci{R} | \mathd x_1 \mathd x_2 \]
  is finite. The supremum runs over all dyadic rectangles.
  
  Since John--Nirenberg holds for this space, we may define the equivalent norm
  \[ \| b \|^p\ci{p, \tmop{bmo}} = \sup_R \frac{1}{| R |} \int_R | b (x_1, x_2)
     - \langle b \rangle\ci{R} |^p \mathd x_1 \mathd x_2 . \]
\end{df}

\begin{df}
  A function $b (x_1, x_2)$ is in the dyadic rectangular BMO space if
  \[ \| b \|^2\ci{\tmop{BMO}_{\tmop{rec}}} = 
  \sup_R  \frac{1}{| R | } \int_R | b(x_1, x_2) - \langle b \rangle\ci{R_2} (x_1) - \langle b \rangle\ci{R_1}(x_2) + \langle b \rangle\ci{R} |^2 \mathd x_1 \mathd x_2 \]
  is finite. The supremum runs over all dyadic rectangles.
\end{df}

Little BMO can also be realized as a function belonging uniformly to BMO in
each variable separately. The rectangular BMO space can be realized via a
convexity argument as the probabilistic BMO space, where admissible stopping
times are restricted to be tensor products of one-parameter dyadic stopping
times. See \cite{B} for the definition of the probabilistic BMO space in two parameters.

\section{Remarks on one parameter}

\subsection{Lower \texorpdfstring{$\tmop{BMO}$}{BMO} estimate in one parameter via testing}

\

In this section we prove the following lower estimate by testing the
commutator on an appropriate test function:

\begin{thm}
  There holds the one-parameter two-sided estimate
  \[ \| b \|\ci{\tmop{BMO}} \lesssim \| C_b \|\ci{L^2 \rightarrow L^2} \lesssim \| b \|\ci{\tmop{BMO}} . \]
\end{thm}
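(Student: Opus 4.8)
The plan is to take the upper estimate from the literature and to prove the lower one by testing the commutator $C_b=[S,b]$ on a single indicator function. The bound $\|C_b\|\ci{L^2\to L^2}\lesssim\|b\|\ci{\tmop{BMO}}$ is classical for commutators with dyadic shifts: one writes the product $bf$ as its three paraproduct pieces and uses that $b\in\tmop{BMO}$ makes the relevant paraproducts bounded on $L^2$, together with the $L^2$-boundedness of $S$; see \cite{OP,HPW}. I will therefore concentrate on the lower estimate. Since the statement is non-trivial only when $\|C_b\|\ci{L^2\to L^2}<\infty$, I may assume this, so that $C_b\1\ci{I}\in L^2$ with $\|C_b\1\ci{I}\|\ci{L^2}\leqslant\|C_b\|\ci{L^2\to L^2}\sqrt{|I|}$ for every dyadic interval $I$; this justifies the manipulations with Haar expansions below.

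Fix a dyadic interval $I_0$ and write $I'$ for the dyadic sibling of $I$. Two features of $S$ drive the proof: first, $Sh\ci{I}=\pm h\ci{I'}$ for every dyadic $I$, so $S$ permutes the Haar basis up to signs and is in particular an $L^2$-isometry; second, the Haar expansion of $\1\ci{I_0}$ is supported on dyadic intervals strictly containing $I_0$, so $S\1\ci{I_0}$ is a linear combination of Haar functions $h\ci{K'}$ with $K\supsetneq I_0$, and since each such $K'$ is disjoint from $I_0$, the function $S\1\ci{I_0}$ is supported off $I_0$. Now split $b\1\ci{I_0}=\langle b\rangle\ci{I_0}\1\ci{I_0}+\beta$, where $\beta=(b-\langle b\rangle\ci{I_0})\1\ci{I_0}$ has Haar expansion $\beta=\sum_{K\subseteq I_0}b\ci{K}h\ci{K}$. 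Then
\[ C_b\1\ci{I_0}=S\beta+\big(\langle b\rangle\ci{I_0}-b\big)\,S\1\ci{I_0}. \]
The second term vanishes on $I_0$, so the restriction $\1\ci{I_0}\,(C_b\1\ci{I_0})$ equals $\1\ci{I_0}\,S\beta$, where $S\beta=\sum_{K\subseteq I_0}\pm\,b\ci{K}h\ci{K'}$; the term $K=I_0$ produces $h\ci{I_0'}$, which again vanishes on $I_0$, while every $K\subsetneq I_0$ has $K'\subsetneq I_0$. Re-indexing by the sibling map $L\mapsto L'$, which permutes $\{L\in\cD:L\subsetneq I_0\}$, we get $\1\ci{I_0}\,(C_b\1\ci{I_0})=\sum_{L\subsetneq I_0}\pm\,b\ci{L'}h\ci{L}$, and hence, by orthonormality of the Haar system,
\[ \|C_b\1\ci{I_0}\|\ci{L^2}^2\;\geqslant\;\big\|\1\ci{I_0}\,(C_b\1\ci{I_0})\big\|\ci{L^2}^2\;=\;\sum_{L\subsetneq I_0}|b\ci{L'}|^2\;=\;\sum_{L\subsetneq I_0}|b\ci{L}|^2. \]
Dividing by $\|\1\ci{I_0}\|\ci{L^2}^2=|I_0|$ gives $\|C_b\|\ci{L^2\to L^2}^2\geqslant\frac{1}{|I_0|}\sum_{L\subsetneq I_0}|b\ci{L}|^2$ for every dyadic $I_0$.

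To reach the $\tmop{BMO}$ norm I would apply this with $I_0=\hat J$ for an arbitrary dyadic $J$: every $L\subseteq J$ satisfies $L\subsetneq\hat J$, so
\[ \|C_b\|\ci{L^2\to L^2}^2\;\geqslant\;\frac{1}{|\hat J|}\sum_{L\subsetneq\hat J}|b\ci{L}|^2\;\geqslant\;\frac{1}{2|J|}\sum_{L\subseteq J}|b\ci{L}|^2\;=\;\frac{1}{2|J|}\int_J|b-\langle b\rangle\ci{J}|^2\,\mathd x. \]
Taking the supremum over $J$ yields $\|C_b\|\ci{L^2\to L^2}^2\geqslant\tfrac12\|b\|^2\ci{2,\tmop{BMO}}$, and the John--Nirenberg equivalence $\|b\|\ci{2,\tmop{BMO}}\approx\|b\|\ci{1,\tmop{BMO}}=\|b\|\ci{\tmop{BMO}}$ finishes the proof.

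The computation is short once it is set up the right way, and the step I expect to carry the weight is the structural observation that $S\1\ci{I_0}$ has no component inside $I_0$: this is exactly what makes testing on $\1\ci{I_0}$ display the \emph{entire} family $\{b\ci{L}:L\subsetneq I_0\}$ at once, rather than a single Haar coefficient (as testing on $h\ci{I_0}$ alone would), and it is the feature one should isolate if the same scheme is to cover other dyadic shifts, as announced in the introduction. The remaining ingredients --- the Haar expansion of $\1\ci{I_0}$, the sibling re-indexing, and the final passage to the $\tmop{BMO}$ supremum via John--Nirenberg --- are routine.
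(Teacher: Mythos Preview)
Your proof is correct and follows essentially the same testing strategy as the paper. The only variation is in the region of localization: the paper restricts $[S,b]\1\ci{I}$ to $\hat I$ and uses that $S(P\ci{I}b)$ is entirely supported in $\hat I$, so the restricted $L^2$ norm equals the full norm $\|S(P\ci{I}b)\|\ci{L^2}=\|P\ci{I}b\|\ci{L^2}=\int_I|b-\langle b\rangle\ci{I}|^2$ directly by isometry; you instead restrict to $I_0$ itself, which drops the single term $K=I_0$ (whose image $h\ci{I_0'}$ lands outside $I_0$), and then recover it at the end by applying the estimate to $I_0=\hat J$. This costs you the harmless factor $\tfrac12$ but is otherwise the same argument.
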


\begin{proof}
  For any dyadic interval $I$ with parent $\hat{I}$, we will provide a lower
  estimate for
  \[ \| S (b \1\ci{I}) - b S (\1\ci{I}) \|\ci{L^2 (\hat{I})}, \]
  which is bounded above by the full $L^2$ norm of the commutator. It will be
  useful to know how $S$ acts on characteristic functions. There holds
  \begin{eqnarray*}
    S (\1\ci{I}) & = & S \left( \sum_{L} (\1\ci{I}, h\ci{L}) h\ci{L} \right)\\
    & = & S \left( \sum_{L : L \supsetneqq I} (\1\ci{I}, h\ci{L}) h\ci{L} \right) .
  \end{eqnarray*}
  A simple but important observation is that $S (\1\ci{I})$ is supported outside
  of $\hat{I}$. Indeed, for intervals $L \supsetneqq I$ observe that $S (h\ci{L})$ is
  supported on the dyadic sibling $L'$ of $L$ and has therefore no support on
  $I$ or $I'$. Let us now consider the local part $P\ci{I}b$ defined as
	  \[ P\ci{I}b = \sum_{K \in \mathcal{D} (I)} b\ci{K} h\ci{K} . \]
  We calculate
  \[ S (P\ci{I}b \cdot \1\ci{I}) = S \left( \sum_{K \in \mathcal{D} (I)} b\ci{K} h\ci{K}
     \cdot \1\ci{I} \right) = S \left( \sum_{K \in \mathcal{D} (I)} b\ci{K} h\ci{K}
     \right) . \]
  Then we calculate
  \[ P\ci{I}b \cdot S (\1\ci{I}) = \sum_{K \in \mathcal{D} (I)} b\ci{K} h\ci{K} \left[ S
     \left( \sum_{L : L \supsetneqq I} (\1\ci{I}, h\ci{L}) h\ci{L} 
     \right) \right]
     = 0. \]
  Indeed, $S (\1\ci{I})$ has no support in $I$ but $P\ci{I}b$ is only supported on
  $I$. Let us now consider the outer part
  \[ P\ci{I^c}b = \sum_{K \nin \mathcal{D} (I)} b\ci{K} h\ci{K} . \]
  Observe that $P\ci{I^c}b$ is constant on $I$: if $K \cap I = \varnothing$ that
  constant is 0. If $I \subsetneqq K$ then $h\ci{K}$ is constant on $I$. Therefore
  $S (P\ci{I^c}b \cdot \1\ci{I}) = P\ci{I^c}b (I) S (\1\ci{I})$ and this part does not
  have a contribution on $\hat{I}$. Likewise $P\ci{I^c}b S (\1\ci{I})$ has no
  contribution on $\hat{I}$. Gathering the information, we have
  \begin{eqnarray*}
    \| [S, b] \1\ci{I} \|^2\ci{L^2 (\hat{I})} & = & \| [S, P\ci{I}b] \1\ci{I} + [S,
    P\ci{I^c}b] \1\ci{I} \|^2\ci{L^2 (\hat{I})}\\
    & = & \left\| S \left( \sum_{K \in \mathcal{D} (I)} b\ci{K} h\ci{K} \right)
    \right\|^2\ci{L^2 (\hat{I})}\\
    & = & \int_I | b - \langle b \rangle\ci{I} |^2 .
  \end{eqnarray*}
  Here we used the fact that the support of $S (P\ci{I}b)$ is contained in
  $\hat{I}$ and that $S$ is an isometry in $L^2$. These considerations tell us
  that the lower BMO estimate is seen when testing on $\1\ci{I}$
  and taking the supremum in $I$.
  \[ \| b \|\ci{\tmop{BMO}} \leqslant \sup_I \left\| [S, b] \frac{\1\ci{I}}{| I
     |^{1/2}} \right\|\ci{L^2} . \]
  The upper estimate is similar to the one for the classical Haar shift and is
  omitted.
\end{proof}

\section{Two parameter implications}

\subsection{Lower little BMO estimate via testing} \label{littlebmo}

\begin{thm}
  There holds the two-parameter two-sided estimate
  \[ \| b \|\ci{\tmop{bmo}} \lesssim \| C_b \|\ci{L^2 \rightarrow L^2} \lesssim \|
     b \|\ci{\tmop{bmo}} . \]
\end{thm}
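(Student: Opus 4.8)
\emph{Proof strategy.}
The plan is to obtain the lower estimate by running the one-parameter testing argument of the previous section in two parameters; the upper estimate is the known bound for the commutator with a tensor product of dyadic shifts, for which I would refer to \cite{OP} and \cite{HPW}. So fix a dyadic rectangle $R=I\times J$ and test the commutator on $\1\ci{R}/|R|^{1/2}=(\1\ci{I}\otimes\1\ci{J})/|R|^{1/2}$, the two-parameter analogue of the test function $\1\ci{I}/|I|^{1/2}$ used above.

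I would first expand $b$ in the two-parameter Haar system $\{h\ci{K}\otimes h\ci{L}\}$ and split $b\,\1\ci{R}$ into four pieces by tensoring the one-parameter decomposition $b\,\1\ci{I}=P\ci{I}b+\langle b\rangle\ci{I}\1\ci{I}$: in each variable separately one records whether the relevant scale lies inside $R$ (that is $K\in\cD(I)$, resp.\ $L\in\cD(J)$) or strictly contains it. This produces a \emph{bi-Haar} piece $\sum_{K\in\cD(I),\,L\in\cD(J)}(b,h\ci{K}\otimes h\ci{L})\,h\ci{K}\otimes h\ci{L}$, two \emph{mixed} pieces $\bigl(P\ci{I}[\langle b\rangle\ci{R_2}(\fdot)]\bigr)\otimes\1\ci{J}$ and $\1\ci{I}\otimes\bigl(P\ci{J}[\langle b\rangle\ci{R_1}(\fdot)]\bigr)$ — where $P\ci{I}$, $P\ci{J}$ act on the one-variable partial averages $x_1\mapsto\langle b\rangle\ci{R_2}(x_1)$ and $x_2\mapsto\langle b\rangle\ci{R_1}(x_2)$ — and a \emph{bi-constant} piece $\langle b\rangle\ci{R}\,\1\ci{R}$. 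Applying $S_1S_2$ and tracking supports by means of the two elementary facts already exploited in one parameter ($S$ maps a Haar function at a scale inside $I$ to a function supported in $\hat I$, while $S\1\ci{I}$ is supported off $\hat I$, and likewise in the second variable), one sees that $S_1S_2$ sends the four pieces respectively into $\hat I\times\hat J$, into $\hat I\times(\R\setminus\hat J)$, into $(\R\setminus\hat I)\times\hat J$, and into $(\R\setminus\hat I)\times(\R\setminus\hat J)$, while $b\,S_1S_2\1\ci{R}=b\cdot(S\1\ci{I})\otimes(S\1\ci{J})$ is also supported in this last corner. Hence, restricting to $E\ci{R}:=(\hat I\times\R)\cup(\R\times\hat J)=\R^2\setminus\bigl((\R\setminus\hat I)\times(\R\setminus\hat J)\bigr)$ — the complement of (a set containing) the support of $S_1S_2\1\ci{R}$ — kills both the bi-constant contribution and the subtracted term $b\,S_1S_2\1\ci{R}$, and leaves $C_b\1\ci{R}$ equal on $E\ci{R}$ to $S_1S_2$ applied to the bi-Haar piece plus the two mixed pieces, whose images have pairwise disjoint supports.

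Since $S_1S_2$ is an isometry of $L^2(\R^2)$, this should yield
\[
\|C_b\1\ci{R}\|^2\ci{L^2(E\ci{R})}
=\sum_{\substack{K\in\cD(I)\\ L\in\cD(J)}}|(b,h\ci{K}\otimes h\ci{L})|^2
\;+\;|J|\,\|P\ci{I}[\langle b\rangle\ci{R_2}(\fdot)]\|^2\ci{L^2}
\;+\;|I|\,\|P\ci{J}[\langle b\rangle\ci{R_1}(\fdot)]\|^2\ci{L^2},
\]
and it remains to recognise the right-hand side, via the orthogonal decomposition of $b-\langle b\rangle\ci{R}$ on $R$ into its bi-Haar part and its two ``one Haar, one average'' parts, as exactly $\int_R|b-\langle b\rangle\ci{R}|^2$. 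Dividing by $|R|=\|\1\ci{R}\|^2\ci{L^2}$ and taking the supremum over dyadic rectangles then gives $\|b\|\ci{2,\tmop{bmo}}\leqslant\|C_b\|\ci{L^2\rightarrow L^2}$, which upgrades to $\|b\|\ci{\tmop{bmo}}=\|b\|\ci{1,\tmop{bmo}}\lesssim\|C_b\|\ci{L^2\rightarrow L^2}$ by the John--Nirenberg inequality for little BMO recorded in the definition.

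I expect the only real work to be bookkeeping: the four-way split of $b\,\1\ci{R}$, the support analysis distinguishing the action of $S$ on characteristic functions from its action on Haar functions, and the verification that the three surviving pieces are the orthogonal projections of $b-\langle b\rangle\ci{R}$ onto the bi-Haar and the two mixed subspaces of $L^2(R)$ — conceptually the same mechanism as in one parameter, with one extra layer of cases for the second variable. There is also the routine technicality of first taking $b$ with Haar expansion on finitely many scales and passing to the limit, exactly as is done tacitly above. I would further remark that restricting only to $\hat I\times\hat J$ already captures the bi-Haar piece, hence the rectangular BMO lower bound, which is why the same testing argument transfers verbatim to the iterated commutator and to $\tmop{BMO}_{\tmop{rec}}$, as announced in the introduction.
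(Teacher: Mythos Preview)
Your proposal is correct and follows essentially the same testing strategy as the paper: test on $\1\ci{R}$, restrict to the region $E\ci{R}=(\hat I\times\R)\cup(\R\times\hat J)$ (the paper's $\check R$), observe that both $b\,S_1S_2\1\ci{R}$ and the ``outer--outer'' contribution vanish there, and then use that $S_1S_2$ is an $L^2$ isometry. The only cosmetic difference is that the paper groups your bi-Haar and two mixed pieces into a single projection $P\ci{\mathcal R}b$ and notes directly that $P\ci{\mathcal R}b\cdot\1\ci{R}=(b-\langle b\rangle\ci{R})\1\ci{R}$, which spares the four-way split and the final orthogonal recombination.
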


\begin{proof}
  Let us write for $R\in\bfD$, $R=R_1\times R_2$, define the domain $\check R= (\hat{R_1}\times \R) \cup (\R \times \hat{R_2})$ and define $\mathcal{R}^c= \{K\in \bfD:R_i \subsetneqq K_i \text{ for } i=1,2 \}$. Then, let
  \[ P\ci{\mathcal{R}}b = \sum_{K \in \mathcal{R} } b\ci{K} h\ci{K} \text{ and } P\ci{\mathcal{R}^c}b = \sum_{K
     \in \mathcal{R}^c } b\ci{K} h\ci{K} . \]
  Observe that the sum over $K \in \mathcal{R}$ can be split as
  \begin{equation*}
      P\ci{\mathcal{R}}b = \sum_{K\in\bfD} b\ci{K} h\ci{K}
      - \sum_{K_1\nin\mathcal{D}(R_1)} \sum_{K_2\nin\mathcal{D}(R_2)} b\ci{K_1\times K_2} h\ci{K_1\times K_2}.
  \end{equation*}
  The first term in the previous expression is just $b(x,y),$ while for $(x,y) \in R,$
  the second term is just $\langle b \rangle\ci{R}$ (it is enough to apply the one-parameter argument on each variable separately).
  In other words, we have that
  \begin{equation}
      \label{eq:LocalisedFunctionB}
      P\ci{\mathcal{R}}b (x,y) = b(x,y) - \langle b \rangle\ci{R}
  \end{equation}
  for all $(x,y) \in R.$
  We test on $\1\ci R$, split the commutator $C_b=C_{P\ci{\mathcal{R}}b} + C_{P\ci{\mathcal{R}^c}b} $ and integrate only in $\check{R}$. Observe that 
  $(P\ci{\mathcal{R}}b+P\ci{\mathcal{R}^c}b) S_1 S_2 (\1\ci{R})$ has no contribution in $\check{R}$. Likewise, observe that when $K\in \mathcal{R}^c$, then $h\ci K$ is constant on $R$ and $S_1 S_2
  (h\ci K\1\ci{R}) = h\ci K(R)S_1S_2 (\1\ci{R})$ with no contribution on $\check{R}$. The remaining term is $S_1S_2(P\ci{\mathcal{R}}b \1\ci R)$. Now, we observe that the support of $S_1S_2(P\ci{\mathcal{R}}b \1\ci R)$ lies in $\check{R}$. To this end, we only need to check the action of the shifts on functions $h\ci{K}\1\ci{R}$ with $K \in \mathcal{R}.$ Indeed, when $K\cap R=\emptyset$, then $h\ci K \1\ci R=0$. The $K \in \mathcal{R}$ that remain have $K_i\subseteq R_i$ for either $i=1$ or $i=2$. Let us assume that $K_1 \subseteq R_1,$ as the other case follows the same argument. It holds that $S_1(h\ci K \1\ci R)$ has support in $\hat{K_1}\times \R$ and so $S_1S_2(h\ci{K} \1\ci R)$ has support in $\check{R}$. Finally, $P\ci{\mathcal{R}}b \1 \ci R=\1\ci R(b-\langle b \rangle\ci R),$ which follows from \eqref{eq:LocalisedFunctionB}.

   Gathering
  the information, we have
  \begin{eqnarray*}
    \| [S_1 S_2, b] \1\ci{R} \|\ci{L^2 (\check{R})}^2 
    & = & \left\| S_1 S_2( P\ci{\mathcal{R}}b \1\ci{R}) \right\|^2\ci{L^2 (\check{R})}\\
    & = & \left\| S_1 S_2( P\ci{\mathcal{R}}b \1\ci{R}) 
    \right\|^2_{L^2}\\
    & = & \int_R | b(x_1,x_2) - \langle b \rangle\ci{R} |^2 \mathd x_1 \mathd x_2 .
  \end{eqnarray*}
  
\end{proof}

\subsection{Lower rectangular BMO estimate for iterated commutator}\label{rect}

\

In this section we prove a lower rectangular BMO estimate by testing the
iterated commutator $C^{\tmop{it}}_b = [S_1, [S_2, b]]=[S_2, [S_1, b]]$ on $\1\ci{R}$.

\begin{thm}
  There holds
  \[ \| b \|\ci{\tmop{BMO}_{\tmop{rec}}} \lesssim \| C^{\tmop{it}}_b \|\ci{L^2
     \rightarrow L^2} . \]
\end{thm}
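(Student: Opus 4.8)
The plan is to imitate the testing arguments of the two previous subsections, now testing $C^{\tmop{it}}_b$ on $\1\ci R$ for a dyadic rectangle $R=R_1\times R_2$ and using that
\[ \|C^{\tmop{it}}_b\|\ci{L^2\to L^2}\geqslant |R|^{-1/2}\,\big\|C^{\tmop{it}}_b\1\ci R\big\|\ci{L^2(\hat{R_1}\times\hat{R_2})} , \]
that is, we retain only the portion of the output living over the enlarged rectangle $\hat{R_1}\times\hat{R_2}$. Expanding the nested commutator and using $S_1S_2=S_2S_1$ we write, as operators,
\[ C^{\tmop{it}}_b = S_1S_2\,b - S_1\,b\,S_2 - S_2\,b\,S_1 + b\,S_1S_2 , \]
apply this to $\1\ci R=\1\ci{R_1}\otimes\1\ci{R_2}$, and recall from the one-parameter analysis that $S(\1\ci J)$ is supported off $\hat J$ while $S(h\ci K)$ is supported on the sibling $K'$; note in particular that $K\subseteq R_i$ forces $K'\subseteq\hat{R_i}$.

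Three of the four terms vanish on $\hat{R_1}\times\hat{R_2}$ for cheap reasons: $b\,S_1S_2\1\ci R$ is supported in $(\hat{R_1})^c\times(\hat{R_2})^c$; the term $S_2(b\,S_1\1\ci R)$ keeps the first-variable support $(\hat{R_1})^c$ of $S\1\ci{R_1}$, since $S_2$ acts only in $x_2$; and symmetrically $S_1(b\,S_2\1\ci R)$ is supported where $x_2\notin\hat{R_2}$. For the surviving term $S_1S_2(b\1\ci R)$ I would expand $b\1\ci R$ in the Haar basis adapted to $R$,
\begin{align*}
b\1\ci R = {}& \langle b\rangle\ci R\,\1\ci R + \Big(\sum_{K_1\subseteq R_1} b'_{K_1}\,h\ci{K_1}\Big)\otimes\1\ci{R_2} + \1\ci{R_1}\otimes\Big(\sum_{K_2\subseteq R_2} b''_{K_2}\,h\ci{K_2}\Big)\\
&{}+ \sum_{\substack{K_1\subseteq R_1\\ K_2\subseteq R_2}} b\ci{K_1\times K_2}\,h\ci{K_1}\otimes h\ci{K_2},
\end{align*}
and note that the constant part and the two partially cancellative (row and column) parts each carry a factor $\1\ci{R_1}$ or $\1\ci{R_2}$ in one variable, whose $S$-image in that variable lives off $\hat{R_1}$, resp.\ off $\hat{R_2}$, so they contribute nothing on $\hat{R_1}\times\hat{R_2}$; each remaining $h\ci{K_1}\otimes h\ci{K_2}$ with $K_i\subseteq R_i$ is carried by $S_1S_2$ to a function supported inside $\hat{R_1}\times\hat{R_2}$. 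Hence
\[ \1\ci{\hat{R_1}\times\hat{R_2}}\cdot C^{\tmop{it}}_b\1\ci R = S_1S_2\Big(\sum_{\substack{K_1\subseteq R_1\\ K_2\subseteq R_2}} b\ci{K_1\times K_2}\,h\ci{K_1}\otimes h\ci{K_2}\Big). \]

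To conclude I would identify the doubly cancellative part with the rectangular oscillation,
\[ \sum_{\substack{K_1\subseteq R_1\\ K_2\subseteq R_2}} b\ci{K_1\times K_2}\,h\ci{K_1}\otimes h\ci{K_2} = \1\ci R\big(b - \langle b\rangle\ci{R_2}(x_1) - \langle b\rangle\ci{R_1}(x_2) + \langle b\rangle\ci R\big), \]
which follows by expanding $\langle b\rangle\ci{R_2}(x_1)$ and $\langle b\rangle\ci{R_1}(x_2)$ in the one-variable Haar systems on $R_1$, resp.\ $R_2$, and then use that $S$, hence $S_1S_2=S\otimes S$, is an $L^2$-isometry. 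This yields
\[ \big\|C^{\tmop{it}}_b\1\ci R\big\|^2\ci{L^2(\hat{R_1}\times\hat{R_2})} = \int_R\big|b(x_1,x_2) - \langle b\rangle\ci{R_2}(x_1) - \langle b\rangle\ci{R_1}(x_2) + \langle b\rangle\ci R\big|^2\,\mathd x_1\,\mathd x_2 , \]
and dividing by $\|\1\ci R\|^2\ci{L^2}=|R|$ and taking the supremum over dyadic rectangles gives $\|b\|\ci{\tmop{BMO}_{\tmop{rec}}}\leqslant\|C^{\tmop{it}}_b\|\ci{L^2\to L^2}$.

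The only genuine obstacle is the support bookkeeping, and in particular the point that one must integrate the output over the honest product $\hat{R_1}\times\hat{R_2}$ rather than over the cross $\check R$ used in the little bmo argument: over $\check R$ the row and column parts of $b\1\ci R$, together with the leftover terms coming from $b\,S_1\1\ci R$ and $b\,S_2\1\ci R$, would re-enter, and one would recover the little bmo oscillation instead of the rectangular one. The presence of \emph{both} cross terms $S_1\,b\,S_2$ and $S_2\,b\,S_1$ in the nested commutator is exactly what kills those contributions on $\hat{R_1}\times\hat{R_2}$.
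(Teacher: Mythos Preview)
Your proof is correct and arrives at the same identity as the paper,
\[
\|C^{\tmop{it}}_b \1\ci R\|^2\ci{L^2(\hat{R_1}\times\hat{R_2})}=\int_R\big|b-\langle b\rangle\ci{R_2}-\langle b\rangle\ci{R_1}+\langle b\rangle\ci R\big|^2,
\]
but the route is genuinely different. The paper does not expand $C^{\tmop{it}}_b$ into its four terms; instead it sets $\varphi:=[S_1,b]\1\ci{R_1}$ (a function of both variables), observes that $C^{\tmop{it}}_b\1\ci R=[S_2,\varphi]\1\ci{R_2}$, and then applies the one-parameter testing identity twice: first in $x_2$ to get $\int_{\hat{R_1}}\int_{R_2}|\varphi-\langle\varphi\rangle\ci{R_2}|^2$, then rewrites $\varphi-\langle\varphi\rangle\ci{R_2}=[S_1,b-\langle b\rangle\ci{R_2}]\1\ci{R_1}$ and applies the identity again in $x_1$. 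This iterative approach is slicker---no Haar expansion of $b\1\ci R$, no explicit tracking of row/column pieces---and it makes transparent why the \emph{iterated} commutator is the right object for rectangular BMO. Your direct four-term expansion, on the other hand, is more hands-on but has the virtue of isolating exactly which structural feature of the iterated commutator (the two cross terms $S_1bS_2$ and $S_2bS_1$) kills the row and column contributions on $\hat{R_1}\times\hat{R_2}$; your closing remark about why $\check R$ would not work here is a nice observation absent from the paper.
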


The proof is similar to the one parameter case above. Let $R = R_1 \times R_2$
be a dyadic rectangle.

We calculate $C^{\tmop{it}}_b \1\ci{R}$. For that, introduce $\varphi = [S_1, b]
(\1\ci{R_1})$ and observe that
\[ [S_1, b] (\1\ci{R}) = [S_1, b] (\1\ci{R_1}) \1\ci{R_2} = \varphi \1\ci{R_2},
\]
and 
\[ [S_1, b] (S_2 (\1\ci{R})) = [S_1, b] (\1\ci{R_1}) S_2 (\1\ci{R_2}) = \varphi
   S_2 (\1\ci{R_2}) . \]
It follows
\[ C^{\tmop{it}}_b \1\ci{R} = S_2 ([S_1, b] (\1\ci{R})) - [S_1, b] (S_2 (\1\ci{R}))
   = S_2 (\varphi \1\ci{R_2}) - \varphi S_2 (\1\ci{R_2}) . \]
Now we integrate the commutator, using the facts learned in the one-parameter
case.
\begin{align*}
  &\int_{\widehat{R_1}} \int_{\widehat{R_2}} | [S_2, \varphi] \1\ci{R_2} |^2 \mathd
  x_2 \mathd x_1 \\
  &=  \int_{\widehat{R_1}} \int_{R_2} | \varphi - \langle
  \varphi \rangle\ci{R_2} |^2 \mathd x_2 \mathd x_1\\
  &=  \int_{\widehat{R_1}} \int_{R_2} | [S_1, b] \1\ci{R_1} - \langle [S_1,
  b] \1_{R_1} \rangle\ci{R_2} |^2 \mathd x_2 \mathd x_1\\
  &=  \int_{\widehat{R_1}} \int_{R_2} | [S_1, b - \langle b \rangle\ci{R_2}]
  \1\ci{R_1} |^2 \mathd x_2 \mathd x_1\\
  &=  \int_{R_2} \int_{R_1} | b - \langle b \rangle\ci{R_2} - \langle b -
  \langle b \rangle\ci{R_2} \rangle\ci{R_1} |^2 \mathd x_1 \mathd x_2\\
  & =  \int_{R_2} \int_{R_1} | b (x_1, x_2) - \langle b \rangle\ci{R_2} (x_1)
  - \langle b \rangle\ci{R_1} (x_2) + \langle b \rangle\ci{R} |^2 \mathd x_1 \mathd
  x_2 .
\end{align*}
Taking the supremum over $R$ delivers the rectangular BMO lower estimate.

\section{Lower estimates via the kernel}

In this section we work in $L^p$ --- in the light of the upper estimates, this is not needed in the unweighted case, but will be of value in the weighted case later on.

\subsection{Kernel representation}
First, we establish the kernel of the operator $S_1S_2$. We recall that in one parameter
\begin{equation*}
Sf:=\sum_{I\in\cD}(f\ci{I_{+}}h\ci{I_{-}}-f\ci{I_{-}}h\ci{I_{+}}).
\end{equation*}
Thus in two parameters (now writing $R=I\times J$ to avoid indices)
\begin{equation*}
(S\otimes S)f=S_1S_2f=\sum_{I,J\in\cD}\sum_{\e,\delta\in\lbrace-,+\rbrace}\e\delta f\ci{I_{\e}\times J_{\delta}}h\ci{I_{-\e}\times J_{-\delta}}.
\end{equation*}
The operator $S_1S_2$ has a formal kernel. Namely
\begin{equation*}
S_1S_2f(x)=\int_{\R^2}\cK(x,y)f(y)\mathd y,
\end{equation*}
where
\begin{equation}
\label{kernel}
\cK(x,y):=\sum_{I,J\in\cD}\sum_{\e,\delta\in\lbrace-,+\rbrace}\e\delta h\ci{I_{\e}\times J_{\delta}}(y)h\ci{I_{-\e}\times J_{-\delta}}(x),
\end{equation}
for all $x=(x_1,x_2),y=(y_1,y_2)\in\R^2$ with $x_i\neq y_i,~i=1,2$, and $\cK(x,y):=0$ if $x_1=y_1$ or $x_2=y_2$. The kernel $\cK$ given by formula \eqref{kernel} is well-defined pointwise. In fact, for each $x,y\in\R^2$ with $x_i\neq y_i,~i=1,2$ there exists at most one dyadic rectangle $I\times J$ such that $\sum_{\e,\delta\in\lbrace-,+\rbrace}\e\delta h\ci{I_{\e}\times J_{\delta}}(y)h\ci{I_{-\e}\times J_{-\delta}}(x)\neq0$, and then exactly one of the products $h\ci{I_{\e}\times J_{\delta}}(y)h\ci{I_{-\e}\times J_{-\delta}}(x),~\e,\delta\in\lbrace-,+\rbrace$ is nonzero ($I\times J$ is then the minimal dyadic rectangle containing both $x$ and $y$). In particular, for all $I,J\in\cD$ and for all $x,y\in I\times J$ with $x_i\neq y_i$, $i=1,2$, we have $\cK(x,y)\neq0$ and
\begin{equation}
\label{kernel over rectangle}
\cK(x,y)=\sum_{\e,\delta\in\lbrace-,+\rbrace}\sum_{\substack{K\in\cD(I)\\L\in\cD(J)}}\e\delta h\ci{K_{\e}\times L_{\delta}}(y)h\ci{K_{-\e}\times L_{-\delta}}(x).
\end{equation}
For all $x\in\R^2$, set
\begin{align*}
A_{x}&:=\lbrace y\in\R^2:~\cK(x,y)\neq0\rbrace\\
&=\lbrace y\in\R^2:~~x_i\neq y_i,~i=1,2,~\exists I,J\in\cD\text{ such that }x,y\in I\times J\rbrace.
\end{align*}
Then the previous observations imply that for all $I,J\in\cD$ and for \emph{all} $x,y\in\R^2$ with $x_i\neq y_i$, $i=1,2$ we have
\begin{equation}
\label{inverse kernel over rectangle}
\1\ci{I\times J}(x)\cdot\1\ci{I\times J}(y)\cdot\frac{\1\ci{A_{x}}(y)}{\cK(x,y)}=\sum_{\substack{K\in\cD(I)\\L\in\cD(J)}}\sum_{\e,\delta\in\lbrace-,+\rbrace}\e\delta\frac{\1\ci{K_{\e}\times L_{\delta}}(y)}{h\ci{K_{\e}\times L_{\delta}}(y)}\cdot\frac{\1\ci{K_{-\e}\times L_{-\delta}}(x)}{h\ci{K_{-\e}\times L_{-\delta}}(x)},
\end{equation}
where we adopt the convention $\frac{0}{0}=0$.

\subsection{A heuristic argument} 

We want to prove that
\begin{equation}
\label{unweighted goal}
\left(\frac{1}{|I\times J|}\int_{I\times J}|b(x)-\La b\Ra\ci{I\times J}|^p\mathd x\right)^{1/p}\lesssim\Vert[S_1S_2,b]\Vert\ci{L^p(\R^2)\rightarrow L^p(\R^2)},\qquad\forall I,J\in\cD.
\end{equation}
Note that formally
\begin{equation}
\label{formal_commutator}
[S_1S_2,b]f(x)=\int_{\R^2}(b(x)-b(y))\cK(x,y)f(y)\mathd y.
\end{equation}
Let $I,J\in\cD$ be arbitrary. Fix $x\in I\times J$. Then
\begin{equation*}
(I\times J)\setminus((\lbrace x_1\rbrace\times\R)\cup(\R\times\lbrace x_2\rbrace))\subseteq A_x.
\end{equation*}
Therefore, we can formally write
\begin{align*}
&|I\times J|\1\ci{I\times J}(x)(b(x)-\La b\Ra\ci{I\times J})=\1\ci{I\times J}(x)\int_{I\times J}(b(x)-b(y))\mathd y\\
&=\1\ci{I\times J}(x)\int_{A_x}(b(x)-b(y))\1\ci{I\times J}(y)\mathd y\\
&=\1\ci{I\times J}(x)\int_{\R^2}(b(x)-b(y))\1\ci{I\times J}(y)\cK(x,y)\frac{\1\ci{A_x}(y)}{\cK(x,y)}\mathd y\\
&=[S_1S_2,b]\left(\1\ci{I\times J}(x)\cdot\1\ci{I\times J}\cdot\frac{\1\ci{A_x}}{\cK(x,\fdot)}\right)(x)\\
&=\sum_{\substack{K\in\cD(I)\\L\in\cD(J)}}\sum_{\e,\delta\in\lbrace-,+\rbrace}\e\delta[S_1S_2,b]\left(\frac{\1\ci{K_{\e}\times L_{\delta}}}{h\ci{K_{\e}\times L_{\delta}}}\right)(x)\cdot\frac{\1\ci{K_{-\e}\times L_{-\delta}}(x)}{h\ci{K_{-\e}\times L_{-\delta}}(x)}.
\end{align*}
Thus, taking the $L^p_{x}(\R^2)$ norms of both sides, writing $K'$ for the sibling of a dyadic interval $K$, and using the triangle inequality we deduce
\begin{align*}
&\Vert |I\times J|\1\ci{I\times J}(x)(b(x)-\La b\Ra\ci{J})\Vert\ci{L^p_x(\R)}\\
&\leq
\Vert [S_1S_2,b]\Vert\ci{L^p(\R^2)\rightarrow L^p(\R^2)}\sum_{\substack{K\in\cD(I)\setminus\lbrace I\rbrace\\L\in\cD(J)\setminus\lbrace J\rbrace}}\left\Vert \frac{\1\ci{K\times L}(x)}{h\ci{K\times L}(x)}\right\Vert\ci{L^{\infty}_{x}(\R^2)}\left\Vert\frac{\1\ci{K'\times L'}}{h\ci{K'\times L'}}\right\Vert\ci{L^p(\R^2)}.
\end{align*}
We have
\begin{equation*}
\left\Vert \frac{\1\ci{K\times L}(x)}{h\ci{K\times L}(x)}\right\Vert\ci{L^{\infty}_{x}(\R^2)}=\sqrt{|K\times L|},\qquad \left\Vert\frac{\1\ci{K'\times L'}}{h\ci{K'\times L'}}\right\Vert\ci{L^p(\R^2)}=|K'\times L'|^{\frac{1}{2}+\frac{1}{p}}=|K\times L|^{\frac{1}{2}+\frac{1}{p}}.
\end{equation*}
Thus
\begin{align*}
&\sum_{\substack{K\in\cD(I)\setminus\lbrace I\rbrace\\L\in\cD(J)\setminus\lbrace J\rbrace}}\left\Vert \frac{\1\ci{K\times L}(x)}{h\ci{K\times L}(x)}\right\Vert\ci{L^{\infty}_{x}(\R^2)}\left\Vert\frac{\1\ci{K_{s}\times L_s}(y)}{h\ci{K_{s}\times L_s}(y)}\right\Vert\ci{L^p_{y}(\R^2)}\\
&=\sum_{\substack{K\in\cD(I)\setminus\lbrace I\rbrace\\L\in\cD(J)\setminus\lbrace J\rbrace}}|K\times L|^{1+\frac{1}{p}}=
\bigg(\sum_{\substack{K\in\cD(I)\\K\neq I}}|K|^{1+\frac{1}{p}}\bigg)\bigg(\sum_{\substack{L\in\cD(J)\\L\neq J}}|L|^{1+\frac{1}{p}}\bigg).
\end{align*}
We have
\begin{align*}
\sum_{\substack{K\in\cD(I)\\K\neq I}}|K|^{1+\frac{1}{p}}=|I|^{1+\frac{1}{p}}\sum_{n=1}^{\infty}2^{n}\cdot2^{-n\left(1+\frac{1}{p}\right)}=c_p|I|^{1+\frac{1}{p}},
\end{align*}
and similarly for $J$. Thus
\begin{equation*}
|I\times J|\left(\int_{I\times J}|b(x)-\La b\Ra\ci{J}|^p\mathd x\right)^{1/p}\leq\Vert [S_1S_2,b]\Vert\ci{L^p(\R^2)\rightarrow L^p(\R^2)}c_{p}^2|I\times J|^{1+\frac{1}{p}},
\end{equation*}
which immediately implies \eqref{unweighted goal} (with constant $c_p^2$).

\subsection{A rigorous argument}

\subsubsection{Truncated kernel} 

For all positive integers $n$, denote by $\cK_{n}$ the truncated kernel given pointwise by
\begin{equation}
\label{truncated kernel}
\cK_n(x,y):=\sum_{\e,\delta\in\lbrace-,+\rbrace}\sum_{\substack{I,J\in\cD\\2^{-n}\leq|I|,|J|\leq2^{n}}}\e\delta h\ci{I_{\e}\times J_{\delta}}(y)h\ci{I_{-\e}\times J_{-\delta}}(x),
\end{equation}
for all $x,y\in\R^2$ with $x_i\neq y_i$, $i=1,2$, and $\cK_n(x,y):=0$ whenever $x_1=y_1$ or $x_2=y_2$.
Note that for every fixed $x\in\R^2$, the sum in \eqref{truncated kernel} is always finite. For all $x\in\R^2$, set
\begin{equation*}
A_{n,x}:=\lbrace y\in\R^2:~\cK_{n}(x,y)\neq0\rbrace.
\end{equation*}
Clearly $A_{n,x}\subseteq A_{n+1,x}$ and $\bigcup_{n=0}^{\infty}A_{n,x}=A_x$. Note that $\cK(x,y)=\cK_{n}(x,y)$, for all $y\in A_{n,x}$ and for all $x\in\R^2$, since for each fixed $x,y\in\R^2$ there exists at most one $I\times J$ having nonzero contribution in the sum in \eqref{kernel}. It follows by \eqref{inverse kernel over rectangle} that for all $I,J\in\cD$ and for \emph{all} $x,y\in\R^2$ with $x_i\neq y_i$, $i=1,2$ we have
\begin{equation}
\label{inverse truncated kernel}
\1\ci{I \times J}(x)\cdot\1\ci{I \times J}(y)\cdot\frac{\1\ci{A_{n,x}}(y)}{\cK_{n}(x,y)}=\sum_{\substack{K\in\cD(I)\\L\in\cD(J)\\2^{-n}\leq|K|,|L|\leq2^{n}}}\sum_{\e,\delta\in\lbrace-,+\rbrace}\e\delta\frac{\1\ci{K_{\e}\times L_{\delta}}(y)}{h\ci{K_{\e}\times L_{\delta}}(y)}\cdot\frac{\1\ci{K_{-\e}\times L_{-\delta}}(x)}{h\ci{K_{-\e}\times L_{-\delta}}(x)}.
\end{equation}

\subsubsection{Truncated operators}

Fix a positive integer $n$. Clearly, the operator $T_n$ given pointwise \emph{everywhere} by
\begin{equation*}
T_nf(x):=\int_{\R^2}\cK_n(x,y)f(y)\mathd y=\sum_{\e,\delta\in\lbrace-,+\rbrace}\sum_{\substack{I,J\in\cD\\2^{-n}\leq|I|,|J|\leq2^{n}}}\e\delta f\ci{I_{\e}\times J_{\delta}}h\ci{I_{-\e}\times J_{-\delta}}(x),
\end{equation*}
for $x\in \R^2$ and $f\in L^1\ti{loc}(\R^2)$, is well-defined. In fact, for every fixed $x\in\R^2$, the sum in the last line above is finite.

\subsubsection{Computations} 

Let now $I,J\in\cD$ be arbitrary. Notice that for all $x\in I\times J$, we have $(I\times J)\setminus ((\lbrace x_1\rbrace\times\R)\cup(\R\times\lbrace x_2\rbrace))\subseteq A_x$. Therefore, for \emph{all} $x\in\R^2$ we have
\begin{align*}
&|I\times J|\1\ci{I\times J}(x)(b(x)-\La b\Ra\ci{I\times J})=\1\ci{I\times J}(x)\int_{I\times J}(b(x)-b(y))\mathd y\\
&=\1\ci{I\times J}(x)\int_{A_x}(b(x)-b(y))\1\ci{I\times J}(y)\mathd y\\
&=\lim_{n\rightarrow\infty}\1\ci{I\times J}(x)\int_{A_{n,x}}(b(x)-b(y))\1\ci{I\times J}(y)\mathd y\\
&=\lim_{n\rightarrow\infty}\1\ci{I\times J}(x)\int_{\R^2}(b(x)-b(y))\1\ci{I\times J}(y)\cK_{n}(x,y)\frac{\1\ci{A_{n,x}}(y)}{\cK_{n}(x,y)}\mathd y\\
&=\lim_{n\rightarrow\infty}[T_n,b]\left(\1\ci{I\times J}(x)\cdot\1\ci{I\times J}\cdot\frac{\1\ci{A_{n,x}}}{\cK_{n}(x,\fdot)}\right)(x)\\
&=\lim_{n\rightarrow\infty}\bigg[\sum_{\substack{K\in\cD(I)\\L\in\cD(J)\\2^{-n}\leq|K|,|L|\leq 2^{n}}}\sum_{\e,\delta\in\lbrace-,+\rbrace}\e\delta[T_n,b]\left(\frac{\1\ci{K_{\e}\times L_{\delta}}}{h\ci{K_{\e}\times L_{\delta}}}\right)(x)\cdot\frac{\1\ci{K_{-\e}\times L_{-\delta}}(x)}{h\ci{K_{-\e}\times L_{-\delta}}(x)}\bigg].
\end{align*}
Note that all sums in the last line are finite, so we have no convergence issues. Notice now that for all $K,L\in\cD$ with $2^{-n}\leq|K|,|L|\leq 2^{n}$ and for any $\e,\delta\in\lbrace+,-\rbrace$ we have
\begin{equation*}
\1\ci{K_{-\e}\times L_{-\delta}}T_n(f\1\ci{K_{\e}\times L_{\delta}})=\e\delta f\ci{K_{\e}\times L_{\delta}}h\ci{K_{-\e}\times L_{-\delta}}=\1\ci{K_{-\e}\times L_{-\delta}}S_1S_2(f\1\ci{K_{\e}\times L_{\delta}})\qquad\text{a.e.},
\end{equation*}
which clearly implies
\begin{equation*}
\1\ci{K_{-\e}\times L_{-\delta}}[T_n,b](f\1\ci{K_{\e}\times L_{\delta}})=\1\ci{K_{-\e}\times L_{-\delta}}[S_1S_2,b](f\1\ci{K_{\e}\times L_{\delta}})\qquad\text{a.e.}
\end{equation*}
Since $\cD$ is a \emph{countable} set, it follows that for all positive integers $n$, there exists a Borel set $E_n$ in $\R^2$ of zero measure, such that for all $K,L\in\cD$, for any $\e,\delta\in\lbrace+,-\rbrace$ and for any $x\in\R^2\setminus E_n$, there holds
\begin{equation*}
\frac{\1\ci{K_{-\e}\times L_{-\delta}}(x)}{h\ci{K_{-\e}\times L_{-\delta}}(x)}[T_n,b]\left(\frac{\1\ci{K_{\e}\times L_{\delta}}}{h\ci{K_{\e}\times L_{\delta}}}\right)(x)=\frac{\1\ci{K_{-\e}\times L_{-\delta}}(x)}{h\ci{K_{-\e}\times L_{-\delta}}(x)}[S_1S_2,b]\left(\frac{\1\ci{K_{\e}\times L_{\delta}}}{h\ci{K_{\e}\times L_{\delta}}}\right)(x).
\end{equation*}
Set $E:=\bigcup_{n=1}^{\infty}E_n$. Then, $E$ is a set of zero measure in $\R$, and we have
\begin{align*}
&|I\times J|\1\ci{I\times J}(x)(b(x)-\La b\Ra\ci{I\times J})\\
&=\lim_{n\rightarrow\infty}\bigg[\sum_{\substack{K\in\cD(I)\\L\in\cD(J)\\2^{-n}\leq|K|,|L|\leq 2^{n}}}\sum_{\e,\delta\in\lbrace-,+\rbrace}\e\delta[S_1S_2,b]\left(\frac{\1\ci{K_{\e}\times L_{\delta}}}{h\ci{K_{\e}\times L_{\delta}}}\right)(x)\cdot\frac{\1\ci{K_{-\e}\times L_{-\delta}}(x)}{h\ci{K_{-\e}\times L_{-\delta}}(x)}\bigg],
\end{align*}
for all $x\in\R^2\setminus E$, and therefore for \emph{almost every} $x\in\R^2$. Hence, taking the $L^p_{x}(\R^2)$ norm of both sides in the last display, using Fatou's lemma (to pass the limit outside the $L^p_{x}(\R^2)$ norm) and then working exactly as in the heuristic argument above, we conclude again \eqref{unweighted goal} (with the same constant $c_p^2$ as in the heuristic argument).

\section{Weighted estimates}

We extend the unweighted bound to a weighted bound at little extra effort. Originally, this type of weighted inequality was introduced by \cite{Bl} and has found new interest, modern proofs and notable generalizations in \cite{HLW}, \cite{HLW2}. Meanwhile, a number of articles are written on the subject of weighted commutators, but most concern upper estimates rather than charaterizations. 

Here and in what follows, recall that a weight $w$ on $\R$ is a locally integrable function on $\R$ which is positive almost everywhere.
We say that a weight $w$ on $\R$ is an $A_p$ weight, for $1 < p < \infty,$ if
\begin{equation}
    \label{eq:ApCondition}
    [w]\ci{A_p} \colon = \sup_{I} \left(\frac{1}{|I|} \int_I w(x)\mathd x\right) \left(\frac{1}{|I|} \int_I (w(x))^{-1/(p-1)}\mathd x\right)^{p-1} < \infty,
\end{equation}
where the supremum ranges over all finite intervals $I \subseteq \R.$
Similarly, we say that a weight $w$ is a dyadic $A_p$ weight, for $1 < p < \infty,$ if the supremum in condition \eqref{eq:ApCondition} ranges only over dyadic intervals $I \in \cD.$
These definitions also extend to the bi-parameter case.
Namely, a weight $w$ on $\R^2$ is a locally integrable function on $\R^2$ which is positive almost everywhere.
Also, we say that a weight $w$ on $\R^2$ is a bi-parameter $A_p$ weight, for $1 < p < \infty,$ if
\begin{equation}
    \label{eq:BiparameterApCondition}
    [w]\ci{A_p} \colon = \sup_{R} \left(\frac{1}{|R|} \int_R w(x)\mathd x\right) \left(\frac{1}{|R|} \int_R (w(x))^{-1/(p-1)}\mathd x\right)^{p-1} < \infty,
\end{equation}
where the supremum ranges over all finite rectangles $R \subseteq \R^2$ of positive measure with sides parallel to the axes.
As before, we say that a weight $w$ is a dyadic bi-parameter $A_p$ weight, for $1 < p < \infty,$ if the supremum in condition \eqref{eq:BiparameterApCondition} ranges only over dyadic rectangles $R.$

\begin{thm}
  \label{thm:LowerBoundByTesting}
  There holds for dyadic bi-parameter $A_p$ weights $\mu,\lambda$, $1<p<\infty$
  \[ \| b \|\ci{p,\tmop{bmo} (\mu,\lambda)} \lesssim \| C_b \|\ci{L^p ({\mu})
     \rightarrow L^p (\lambda)} \lesssim \| b \|\ci{p,\tmop{bmo} (\mu,\lambda)} \]
  with constants independent of the symbol. 
\end{thm}

Recall that the weighted little $\tmop{BMO}$ space can be equivalently defined, as in \cite{HPW}, by the norm
\[\| b \|\ci{p,\tmop{bmo} (\mu,\lambda)}= \sup_R \left( \frac{1}{{\mu} (R)} \int_R | b (x) - \langle b \rangle\ci{R}
   |^p \lambda (x) \mathd x \right)^{1 / p} \]
for $1 < p < \infty.$
   
   See also \cite{HPW} for the upper estimates.

\subsection{Weighted estimate by testing}

Most lower norm estimates in the Bloom setting have been obtained via a use of the argument in  \cite{CRW}, see for example \cite{HLW}, \cite{HLW2} and \cite{HPW}. In \cite{KS} a different argument was needed to give lower norm estimates in a product setting. 

In this section we demonstrate that the lower bounds can be obtained by testing. We work with the exponent $p=2$, the argument works for other $p$. We show the argument in one parameter and point out the modifications needed to pass to the little BMO case in $L^p$ and the rectangular BMO case in $L^2$ for the iterated commutator.

\begin{proof}[Proof of Theorem~\ref{thm:LowerBoundByTesting}]
  As before, for any dyadic interval $I$ with parent $\hat{I}$, we will provide a lower
  estimate for
  \[
\| S (b \1\ci{I}) - b S (\1\ci{I}) \|\ci{L^2 (\hat{I}, \lambda)}, \]
  which is bounded above by $\Vert[S,b]\Vert\ci{L^2(\mu) \rightarrow L^2(\lambda)} \Vert\1\ci{I}\Vert\ci{L^2(\mu)}.$  We have
  \[ \| [S, b] \1\ci{I} \|\ci{L^2 (\hat{I}, \lambda)}^2 = \| [S, P\ci{I}b] \1\ci{I}
     + [S, P\ci{I^c}b] \1\ci{I} \|^2\ci{L^2 (\hat{I}, \lambda)} = \left\| S \left(
     \sum_{K \in \mathcal{D} (I)} b\ci{K} h\ci{K} \right) \right\|^2\ci{L^2 (\hat{I},
     \lambda)} . \]
 Using the $A_2$ characteristic of $\lambda$ and the fact that $S
  : L^2 (\lambda) \rightarrow L^2 (\lambda)$ is bounded, we obtain also a
  lower bound $\| f \|\ci{L^2 (\lambda)} \lesssim \| S f \|\ci{L^2 (\lambda)}$ by
  using that $S^2 = - \tmop{Id}$. Therefore
  \[ \left\| S \left( \sum_{K \in \mathcal{D} (I)} b\ci{K} h\ci{K} \right)
     \right\|^2\ci{L^2 (\hat{I}, \lambda)} \gtrsim \left\| \sum_{K \in
     \mathcal{D} (I)} b\ci{K} h\ci{K} \right\|\ci{L^2 (\hat{I}, \lambda)}^2 \gtrsim
     \int_I | b (x) - \langle b \rangle\ci{I} |^2 \lambda (x) \mathd x. \]
  
  Note that
  \[\int_I | b (x) - \langle b \rangle\ci{I} |^2 \lambda (x) \mathd x =
     \frac{1}{{\mu} (I)} \int_I | b (x) - \langle b \rangle\ci{I} |^2 \lambda
     (x) \mathd x \cdot {\mu} (I),\]
  so these considerations tell us that the lower BMO estimate is seen when
  testing on $\1\ci{I}$ and taking the supremum in $I.$
\end{proof}

The same considerations hold true for the tensor commutator $\mathcal{C}_b$ using the pointwise identities in section  \ref{littlebmo}.

Notice also, that the same reasoning, following the above calculation in section \ref{rect} provides a weighted lower bound for the iterated commutator in terms of rectangular Bloom BMO defined by $$\|b\|^2\ci{\tmop{BMO}(\mu,\lambda)\ti{rect}} =\sup_R\frac1{\mu(R)}\int_R\left|\sum_{K\in \bfD(R)}b\ci K h\ci K\right|^2\lambda,$$ where the supremum runs over dyadic rectangles $R$.

\subsection{Weighted estimate in \texorpdfstring{$L^p$}{Lp} using the kernel}

\begin{thm}
  There holds for dyadic bi-parameter $A_p$ weights $\mu,\lambda$, $1<p<\infty$
  \[ \| b \|\ci{p,\tmop{bmo} (\mu,\lambda)} \lesssim \| C_b \|\ci{L^p ({\mu})
     \rightarrow L^p (\lambda)} \lesssim \| b \|\ci{p,\tmop{bmo} (\mu,\lambda)} \]
  with constants independent of the symbol. 
  \end{thm}

\subsubsection{A heuristic argument} Let $1<p<\infty$, and let $\mu,\lambda$ be any dyadic bi-parameter $A_p$ weights on $\R^2$. We want to prove that
\begin{equation}
\label{weighted goal}
\left(\frac{1}{\mu(I\times J)}\int_{I\times J}|b(x)-\La b\Ra\ci{I\times J}|^{p}\lambda(x)\mathd x\right)^{1/p}\lesssim\Vert[S_1S_2,b]\Vert\ci{L^p(\mu)\rightarrow L^p(\lambda)},
\end{equation}
for all $I,J\in\cD$.

Let $I,J\in\cD$ be arbitrary. Identically to the unweighted case we formally have
\begin{align*}
&|I\times J|\1\ci{I\times J}(x)(b(x)-\La b\Ra\ci{I\times J})\\
&=\sum_{\substack{K\in\cD(I)\\L\in\cD(J)}}\sum_{\e,\delta\in\lbrace-,+\rbrace}\e\delta[S_1S_2,b]\left(\frac{\1\ci{K_{\e}\times L_{\delta}}}{h\ci{K_{\e}\times L_{\delta}}}\right)(x)\cdot\frac{\1\ci{K_{-\e}\times L_{-\delta}}(x)}{h\ci{K_{-\e}\times L_{-\delta}}(x)}.
\end{align*}
Thus, taking the $L^p_{x}(\lambda)$ norms of both handsides and then using the triangle inequality we deduce
\begin{align*}
&\Vert |I\times J|\1\ci{I\times J}(x)(b(x)-\La b\Ra\ci{I\times J})\Vert\ci{L^p_x(\lambda)}\\
&\leq
\Vert [S_1S_2,b]\Vert\ci{L^p(\mu)\rightarrow L^p(\lambda)}\sum_{\substack{K\in\cD(I)\setminus\lbrace I\rbrace\\L\in\cD(J)\setminus\lbrace J\rbrace}}\left\Vert \frac{\1\ci{K\times L}(x)}{h\ci{K\times L}(x)}\right\Vert\ci{L^{\infty}_{x}(\R^2)}\left\Vert\frac{\1\ci{K'\times L'}}{h\ci{K'\times L'}}\right\Vert\ci{L^p(\mu)},
\end{align*}
where we recall that $K'$ denotes the sibling of a dyadic interval $K$. We have
\begin{equation*}
\left\Vert \frac{\1\ci{K\times L}(x)}{h\ci{K\times L}(x)}\right\Vert\ci{L^{\infty}_{x}(\R^2)}=\sqrt{|K\times L|},
\end{equation*}
\begin{equation*}
\left\Vert\frac{\1\ci{K'\times L'}}{h\ci{K'\times L'}}\right\Vert\ci{L^p(\mu)}=|K\times L|^{\frac{1}{2}}(\mu(K'\times L'))^{\frac{1}{p}}.
\end{equation*}
Thus
\begin{align*}
&\sum_{\substack{K\in\cD(I)\setminus\lbrace I\rbrace\\L\in\cD(J)\setminus\lbrace J\rbrace}}\left\Vert \frac{\1\ci{K\times L}(x)}{h\ci{K\times L}(x)}\right\Vert\ci{L^{\infty}_{x}(\R^2)}\left\Vert\frac{\1\ci{K'\times L'}}{h\ci{K'\times L'}}\right\Vert\ci{L^p(\mu)}=\sum_{\substack{K\in\cD(I)\setminus\lbrace I\rbrace\\L\in\cD(J)\setminus\lbrace J\rbrace}}|K\times L|(\mu(K'\times L'))^{\frac{1}{p}}.
\end{align*}
We have
\begin{align*}
&\sum_{\substack{K\in\cD(I)\setminus\lbrace I\rbrace\\L\in\cD(J)\setminus\lbrace J\rbrace}}|K\times L|(\mu(K'\times L'))^{\frac{1}{p}}=\sum_{n,m=1}^{\infty}\sum_{\substack{K\in\text{ch}_{n}(I)\\L\in\text{ch}_{m}(J)}}2^{-n-m}|I\times J|(\mu(K'\times L'))^{1/p}\\
&\leq\sum_{n,m=1}^{\infty}2^{-n-m}|I\times J|\bigg(\sum_{\substack{K\in\text{ch}_{n}(I)\\L\in\text{ch}_{m}(J)}}1\bigg)^{1/p'}\bigg(\sum_{\substack{K\in\text{ch}_{n}(I)\\L\in\text{ch}_{m}(J)}}\mu(K'\times L')\bigg)^{1/p}\\
&=c_p^2|I\times J|(\mu(I\times J))^{1/p}.
\end{align*}
Thus
\begin{equation*}
|I\times J|\left(\int_{I\times J}|b(x)-\La b\Ra\ci{I\times J}|^p\lambda(x)\mathd x\right)^{1/p}\leq\Vert [S_1S_2,b]\Vert\ci{L^p(\mu)\rightarrow L^p(\lambda)}c_p^2|I\times J|(\mu(I\times J))^{1/p},
\end{equation*}
which immediately implies \eqref{weighted goal} (with constant $c_p^2$).

\subsubsection{A rigorous argument}

Let $I,J\in\cD$ be arbitrary. Identically to the unweighted case we have for \emph{almost every} $x\in\R^2$
\begin{align*}
&|I\times J|\1\ci{I\times J}(x)(b(x)-\La b\Ra\ci{I\times J})\\
&=\lim_{n\rightarrow\infty}\bigg[\sum_{\substack{K\in\cD(I)\\L\in\cD(J)\\2^{-n}\leq|K|,|L|\leq 2^{n}}}\sum_{\e,\delta\in\lbrace-,+\rbrace}\e\delta[S_1S_2,b]\left(\frac{\1\ci{K_{\e}\times L_{\delta}}}{h\ci{K_{\e}\times L_{\delta}}}\right)(x)\cdot\frac{\1\ci{K_{-\e}\times L_{-\delta}}(x)}{h\ci{K_{-\e}\times L_{-\delta}}(x)}\bigg],
\end{align*}
Therefore, taking the $L^p_{x}(\lambda)$ norm of both handsides, using Fatou's lemma (to pass the limit outside the $L^p_{x}(\lambda)$ norm)  and then working exactly as in the heuristic weighted argument, we conclude again \eqref{weighted goal} (with the same constant $c_p^2$ as in the heuristic argument).

\section{Lower estimates via the kernel for more general shifts}

In this section we show that the variant of the classical argument due to Coifman--Rochberg--Weiss \cite{CRW} that was used above to obtain lower bounds for commutators involving the shift $S$ can also handle slightly more general shifts, including the one considered in \cite{HTV}.

\subsection{Setup} Let $i,j$ be nonnegative integers. Let $T$ be a Haar shift on the real line of complexity $(i,j)$, i.e. the action of $T$ on (suitable) functions $f$ on the real line is given by
\begin{equation}
\label{shift}
Tf=\sum_{I\in\cD}\sum_{\substack{K\in\ch_{i}(I)\\L\in\ch_{j}(I)}}c\ci{KL}^{I}f\ci{K}h\ci{L},
\end{equation}
where the \emph{shift coefficients} $c\ci{KL}^{I}$ are complex numbers satisfying the bound
\begin{equation*}
|c\ci{KL}^{I}|\leq 2^{-(i+j)/2},
\end{equation*}
for all $K\in\ch_{i}(I), L\in\ch_{j}(I)$ and for all $I\in\cD$. This definition of Haar shifts follows \cite[p. 34]{Per}. The operator $T$ has a formal kernel. Namely, one can write
\begin{equation*}
Tf(x)=\int_{\R}\cK(x,y)f(y)\mathd y,
\end{equation*}
where
\begin{equation}
\label{general kernel}
\cK(x,y):=\sum_{I\in\cD}\sum_{\substack{K\in\ch_{i}(I)\\L\in\ch_{j}(I)}}c\ci{KL}^{I}h\ci{K}(y)h\ci{L}(x),
\end{equation}
for all $x,y\in\R$ with $x\neq y$, and $\cK(x,x):=0$. While the kernel representation for the shift $T$ is formal, the kernel is well-defined pointwise. Indeed, let $x,y\in\R$ with $x\neq y$ be arbitrary. If there is no dyadic interval containing both $x$ and $y$, then $\cK(x,y)=0$. Now assume that there is a dyadic interval containing both $x$ and $y$, and let $J$ be the minimal such dyadic interval. Let $(J_{n})^{\infty}_{n=1}$ be the strictly increasing sequence of all dyadic ancestors of $J$, so that
\begin{equation*}
J=:J_0\subsetneq J_1\subsetneq J_2\subsetneq J_3\subsetneq\ldots.
\end{equation*}
Then, we have
\begin{align*}
&\sum_{I\in\cD}\sum_{\substack{K\in\ch_{i}(I)\\L\in\ch_{j}(I)}}|c\ci{KL}^{I}h\ci{K}(y)h\ci{L}(x)|=
\sum_{n=0}^{\infty}\sum_{\substack{K\in\ch_{i}(J_n)\\L\in\ch_{j}(J_n)}}|c\ci{KL}^{J_n}h\ci{K}(y)h\ci{L}(x)|\\&
\leq2^{-(i+j)/2}\cdot2^{(i+j)/2}\cdot\sum_{n=0}^{\infty}\frac{1}{|J_n|}=\frac{2}{|J|}.
\end{align*}
Since the Haar function over an interval is constant in the dyadic children of that interval, the above computation shows that we can write
\begin{equation}
\label{reduced kernel expression}
\cK(x,y)=\sum_{I\in\cD}\sum_{\substack{K\in\ch_{i+1}(I)\\L\in\ch_{j+1}(I)}}a\ci{KL}^{I}\1\ci{K}(y)\1\ci{L}(x),\qquad\forall x,y\in\R,~x\neq y,
\end{equation}
where the \emph{coefficients} $a\ci{KL}^{I}$ satisfy
\begin{equation*}
|a\ci{KL}^{I}|\leq\frac{2}{|I|},
\end{equation*}
and also $a\ci{KL}^{I}=0$ whenever there is $J\in\text{ch}(I)$ with $K\cup L\subseteq J$.
Note that the only dyadic interval in the first sum for which we have non-zero terms is the minimal dyadic interval containing both $x$ and $y$ (if there is one), so that for each fixed $x,y\in\R$ with $x\neq y$, there is at most one nonzero term in the sum in \eqref{reduced kernel expression}. Namely, if $I$ is the minimal dyadic interval containing both $x$ and $y$, $K$ is the unique interval in $\ch_{i+1}(I)$ containing $y$, and $L$ is the unique interval in $\ch_{j+1}(I)$ containing $x$, then $\cK(x,y)=a\ci{KL}^{I}$.

\subsection{A non-degeneracy condition}

Now we assume the following non-degeneracy condition: there exists some constant $c>0$ (depending only on $i,j$), such that whenever there is no $J\in\ch(I)$ such that $K\cup L\subseteq J$, then there holds
\begin{equation}
\label{non-degeneracy condition}
|a\ci{KL}^{I}|\geq\frac{1}{c|I|}.
\end{equation}
Then, in particular we deduce that if $x\neq y$ and there exists some dyadic interval containing both $x$ and $y$, then $\cK(x,y)\neq0$, in fact
\begin{equation*}
\frac{1}{c|I|}\leq|\cK(x,y)|\leq\frac{2}{|I|},
\end{equation*}
where $I$ is the minimal dyadic interval containing both $x$ and $y$. In Section \ref{sec:examples} below we give representative examples of classes of Haar shifts for which condition \eqref{non-degeneracy condition} is satisfied.

\subsection{The inverse kernel}

For all $x\in\R$, set
\begin{equation*}
A_{x}:=\lbrace y\in\R\setminus\lbrace x\rbrace:~ \cK(x,y)\neq0\rbrace.
\end{equation*}
Note that for all $x\in\R$ and for all $J\in\cD$, we have $J\setminus\lbrace x\rbrace\subseteq A_{x}$. Then, we can write
\begin{equation*}
\frac{\1\ci{A_{x}}(y)}{\cK(x,y)}=\sum_{I\in\cD}\sum_{\substack{K\in\ch_{i+1}(I)\\L\in\ch_{j+1}(I)}}b\ci{KL}^{I}\1\ci{K}(y)\1\ci{L}(x),\qquad\forall x,y\in\R,~x\neq y,
\end{equation*}
where
\begin{equation*}
|b\ci{KL}^{I}|\leq c|I|,
\end{equation*}
and also $b\ci{KL}^{I}=0$ whenever there is $J\in\text{ch}(I)$ with $K\cup L\subseteq J$.
Moreover, as before, this means that the only relevant dyadic interval in the first sum is the minimal dyadic interval $I$ containing both $x$ and $y$ (if there is one). In particular, for all $J\in\cD$, we have the localized version
\begin{equation*}
\frac{\1\ci{J}(x)\cdot\1\ci{J}(y)\cdot\1\ci{A_{x}}(y)}{\cK(x,y)}=\sum_{I\in\cD(J)}\sum_{\substack{K\in\ch_{i+1}(I)\\L\in\ch_{j+1}(I)}}b\ci{KL}^{I}\1\ci{K}(y)\1\ci{L}(x),\qquad\forall x,y\in\R,~x\neq y.
\end{equation*}
Note that we have adopted everywhere the convention $\frac{0}{0}=0$.

\subsection{Lower BMO bounds}

Let $1<p<\infty$, and let $\mu,\lambda$ be dyadic $A_p$ weights on $\R$. Let $J\in\cD$. We will show that
\begin{equation*}
\left(\frac{1}{\mu(J)}\int_{J}|b(x)-\La b\Ra\ci{I}|^{p}\lambda(x)\mathd x\right)^{1/p}\leq cc_{p}2^{\frac{i+1}{p'}}\Vert[T,b]\Vert\ci{L^{p}(\mu)\rightarrow L^{p}(\lambda)}.
\end{equation*}
Let us first give a heuristic argument. We have
\begin{align*}
&|J|\1\ci{J}(x)(b(x)-\La b\Ra\ci{J})=\1\ci{J}(x)\int_{J}(b(x)-b(y))\mathd y\\
&=\1\ci{J}(x)\int_{A_x}(b(x)-b(y))\1\ci{J}(y)\mathd y\\
&=\1\ci{J}(x)\int_{\R}(b(x)-b(y))\1\ci{J}(y)\cK(x,y)\frac{\1\ci{A_x}(y)}{\cK(x,y)}\mathd y\\
&=[T,b]\left(\1\ci{J}(x)\cdot\1\ci{J}(y)\cdot\frac{\1\ci{A_x}(y)}{\cK(x,\fdot)}\right)(x)\\
&=\sum_{I\in\cD(J)}\sum_{\substack{K\in\ch_{i+1}(I)\\L\in\ch_{j+1}(I)}}b\ci{KL}^{I}[T,b](\1\ci{K})(x)\1\ci{L}(x).
\end{align*}
Taking absolute values and using the triangle inequality as well as the non-degeneracy assumption for the coefficients $b\ci{KL}^{I}$ we get
\begin{align*}
|J|\1\ci{J}(x)|b(x)-\La b\Ra\ci{J}|&\leq c\sum_{I\in\cD(J)}|I|\sum_{\substack{K\in\ch_{i+1}(I)\\L\in\ch_{j+1}(I)}}|[T,b](\1\ci{K})(x)|\1\ci{L}(x)\\
&=c\sum_{I\in\cD(J)}|I|\left(\sum_{K\in\ch_{i+1}(I)}|[T,b](\1\ci{K})(x)|\right)\left(\sum_{L\in\ch_{j+1}(I)}\1\ci{L}(x)\right)\\
&=c\sum_{I\in\cD(J)}|I|\sum_{K\in\ch_{i+1}(I)}|[T,b](\1\ci{K})(x)|\1\ci{I}(x).
\end{align*}
Thus, taking $L^p(\lambda)$ norms and using the triangle inequality (for $L^p(\lambda)$ norms), we get
\begin{align*}
|J|^p\left(\int_{J}|b(x)-\La b\Ra\ci{J}|^{p}\lambda(x)\mathd x\right)^{1/p}&\leq c\sum_{I\in\cD(J)}|I|\sum_{K\in\ch_{i+1}(I)}\Vert [T,b](\1\ci{K})\1\ci{I}\Vert\ci{L^{p}(\lambda)}\\
&\leq c\sum_{I\in\cD(J)}|I|\sum_{K\in\ch_{i+1}(I)}\Vert [T,b]\Vert\ci{L^{p}(\mu)\rightarrow L^{p}(\lambda)}(\mu(K))^{1/p}.
\end{align*}
We have
\begin{align*}
\sum_{I\in\cD(J)}|I|\sum_{K\in\ch_{i+1}(I)}(\mu(K))^{1/p}&\leq\sum_{I\in\cD(J)}|I|\left(\sum_{K\in\ch_{i+1}(I)}\mu(K)\right)^{1/p}\left(\sum_{K\in\ch_{i+1}(I)}1\right)^{1/p'}\\
&=2^{\frac{i+1}{p'}}\sum_{I\in\cD(J)}|I|(\mu(I))^{1/p}=2^{\frac{i+1}{p'}}\sum_{n=0}^{\infty}2^{-n}|J|\sum_{I\in\ch_{n}(J)}(\mu(I))^{1/p}\\
&\leq2^{\frac{i+1}{p'}}\sum_{n=0}^{\infty}2^{-n}|J|\left(\sum_{I\in\ch_{n}(J)}\mu(I)\right)^{1/p}\left(\sum_{I\in\ch_{n}(J)}1\right)^{1/p'}\\
&=2^{\frac{i+1}{p'}}|J|(\mu(J))^{1/p}\sum_{n=0}^{\infty}2^{-n/p}=c_{p}2^{\frac{i+1}{p'}}|J|(\mu(J))^{1/p}.
\end{align*}
The claim follows.

To make the above argument formal, one has just to truncate the kernel, similarly to the case of the shift $S$ treated above, with the only difference that in the present more general case one has to truncate \eqref{reduced kernel expression}, instead of \eqref{general kernel}.

A bi-parameter variant of the previous argument also gives little BMO lower bounds for the commutator $[T\otimes T,b]$ (under the same non-degeneracy condition as in the one-parameter case), similarly to the case of the commutator $[S\otimes S,b]$ treated above. In fact, a bi-parameter variant of the above argument also yields little BMO lower bounds for commutators of the form $[\mathbb{S},b],$ where $\mathbb{S}$ is an arbitrary biparameter shift $\mathbb{S}$ (not necessarily of tensor type) satisfying a bi-parameter non-degeneracy condition analogous to \eqref{non-degeneracy condition}.

\subsection{Cases where the non-degeneracy condition \texorpdfstring{\eqref{non-degeneracy condition}}{nondegcondition} is satisfied}
\label{sec:examples}

We give here two representative special cases in which the non-degeneracy condition \eqref{non-degeneracy condition} is satisfied.

\subsubsection{Purely mixing shifts of complexity \texorpdfstring{$(i,i)$}{(i,i)} with mildly varying coefficients}
Take the shift $T$ to have complexity $(i,i),$ so that it has the form
\begin{equation*}
Tf=2^{-i}\sum_{I\in\cD}\sum_{\substack{K\in\ch_{i}(I)\\ L\in\ch_{i}(I)}}c^{I}\ci{KL}f\ci{K}h\ci{L}.
\end{equation*}
Moreover, assume as well that this shift is ``purely mixing'' in the sense that for any $I \in \cD$ and any $K \in \ch_i(I)$ it holds that $c^{I}\ci{KK} = 0,$ and that all other coefficients vary mildly in the sense that there exists $b \in [1,2^i/(2^i-1))$ such that
\begin{equation*}
  1 \leq |c^{I}\ci{KL}| \leq b
\end{equation*}
for all $I\in\cD$ and all $K,L \in \ch_{i}(I)$ with $K \neq L.$
In this case, the kernel for this shift is of the form
\begin{equation*}
\cK(x,y)=2^{-i}\sum_{I\in\cD}\sum_{\substack{K\in\ch_{i}(I)\\ L\in\ch_{i}(I)}}c^{I}\ci{KL}h\ci{K}(y)h\ci{L}(x),
\end{equation*}
with the previous conditions on the coefficients $c^{I}\ci{KL}$ of $Tf.$

To check the non-degeneracy condition \eqref{non-degeneracy condition} for this shift, we estimate directly the coefficients $a^{I}\ci{KL}$ of the kernel $\cK$ in \eqref{reduced kernel expression}.
Observe that for $I\in\cD$ and $K,L \in \ch_{i+1}(I)$ such that $K \cup L$ is not contained in a dyadic child of $I,$ and for $x \in L$ and $y \in K,$ we have that $\cK(x,y) = a^{I}\ci{KL}.$
Thus, fix $I,K,L$ dyadic intervals as before and $x \in L$ and $y \in K,$ and take $(I_n)^{\infty}_{n=1}$ to be the increasing sequence of dyadic ancestors of $I_0 := I.$
Since $c^{J}\ci{PP} = 0$ for any $J \in \cD$ and $P \in \ch_i(J),$ we have that
\begin{equation*}
\cK(x,y)=2^{-i}\sum_{n=0}^{i-1}\sum_{\substack{K\in\ch_{i}(I_n)\\ L\in\ch_{i}(I_n)}}c^{I_n}\ci{KL}h\ci{K}(y)h\ci{L}(x).
\end{equation*}
For this particular $x$ and $y$ we can choose a sequence of signs $(\e_{n})^{i-1}_{n=0}$ in $\lbrace -1, 1\rbrace$ such that
\begin{equation*}
\cK(x,y)=2^{-i}\sum_{n=0}^{i-1} \frac{\e_n c_n}{2^{-i}|I_n|} = \sum_{n=0}^{i-1} \frac{\e_n c_n}{2^n|I|},
\end{equation*}
where $c_n$ are complex numbers such that
\begin{equation*}
  1 \leq |c_n| \leq b
\end{equation*}
for $n = 0, \ldots, i-1.$
Observe that
\begin{equation*}
  \left| \sum_{n=1}^{i-1} \frac{\e_n c_n}{2^n|I|} \right| \leq \frac{b}{|I|} \sum_{n=1}^{i-1} 2^{-n} = \frac{(2^{i-1}-1)b}{2^{i-1}|I|}
\end{equation*}
(here we take the sum to be just $0$ if $i = 1$).
Thus, by the triangle inequality, we get that
\begin{equation*}
  |\cK(x,y)| = \left| \frac{\e_0 c_0}{|I|} + \sum_{n=1}^{i-1} \frac{\e_n c_n}{2^n|I|} \right|
  \geq \left( \frac{1}{|I|} - \frac{(2^{i-1}-1)b}{2^{i-1}|I|} \right) = \left( 1 - \frac{(2^{i-1}-1)b}{2^{i-1}} \right) \frac{1}{|I|}
\end{equation*}
and the condition on $b$ yields the non-degeneracy condition \eqref{non-degeneracy condition}.

\subsubsection{``Sliced'' shifts with mildly varying coefficients} Let $\cD\ti{e}$ be the family of all \text{even} dyadic intervals, i.e.
\begin{equation*}
\cD\ti{e}:=\lbrace I\in\cD:~\log_{2}(|I|)\text{ is even}\rbrace.
\end{equation*}
Assume that the shift $T$ is ``sliced" and that its coefficients do not vary too much, in the sense that
\begin{equation*}
Tf=2^{-(i+j)/2}\sum_{I\in\cD\ti{e}}\sum_{\substack{K\in\ch_{i}(I)\\ L\in\ch_{j}(I)}}c^{I}\ci{KL}f\ci{K}h\ci{L},
\end{equation*}
where there exists $b\in[1,3)$ such that
\begin{equation*}
1\leq|c\ci{KL}^{I}|\leq b,
\end{equation*}
for all $K\in\ch_{i}(I)$, $L\in\ch_{j}(I)$ and for all $I\in\cD_{e}$. In particular, $c^{I}\ci{KL}=0$, for all $K\in\ch_{i}(I)$, $L\in\ch_{j}(I)$ and for all $I\in\cD\setminus\cD_{e}$. Note that
\begin{equation*}
\cK(x,y)=2^{-(i+j)/2}\sum_{I\in\cD\ti{e}}\sum_{\substack{K\in\ch_{i}(I)\\ L\in\ch_{j}(I)}}c^{I}\ci{KL}h\ci{K}(y)h\ci{L}(x).
\end{equation*}

We estimate directly the coefficients $a\ci{KL}^{I}$ in \eqref{reduced kernel expression}.

Let $I\in\cD$, and $K\in\ch_{i+1}(I)$, $L\in\ch_{j+1}(I)$ such that there is no dyadic child of $I$ containing both $K$ and $L$. Pick $x\in L$ and $y\in K$. Then, it is clear that $a^{I}\ci{KL}=\cK(x,y)$.

Let us estimate $\cK(x,y)$. Let $(I_n)^{\infty}_{n=1}$ be the strictly increasing sequence of all dyadic ancestors of $I_{0}:=I$. Then, it is clear that there exist a sequence $(\e_{n})^{\infty}_{n=0}$ in $\lbrace-1,0,1\rbrace$ and a sequence $(c_{n})^{\infty}_{n=0}$ of complex numbers such that
\begin{equation*}
\cK(x,y)=\sum_{n=0}^{\infty}\frac{\e_{n}c_n}{|I_n|},
\end{equation*}
and
\begin{equation*}
1\leq|c_{n}|\leq b,\qquad\forall n=0,1,2\ldots,
\end{equation*}
and $\e_{n}\in\lbrace-1,1\rbrace$ if $I_{n}\in\cD_{e}$, while $\e_{n}=0$ if $I_{n}\in\cD\setminus\cD_{e}$, for all $n=0,1,2\ldots$. We now distinguish two cases.

\textbf{Case 1.} Assume that $I\in\cD_{e}$. Then
\begin{equation*}
\cK(x,y)=\frac{\e_{0}c_{0}}{|I|}+\sum_{n=1}^{\infty}\frac{\e_{2n}c_{2n}}{|I_{2n}|}.
\end{equation*}
We notice that
\begin{align*}
\left|\sum_{n=1}^{\infty}\frac{\e_{2n}c_{2n}}{|I_{2n}|}\right|\leq\sum_{n=1}^{\infty}\frac{b}{|I_{2n}|}=\sum_{n=1}^{\infty}\frac{b}{2^{2n}|I|}=\frac{b}{3|I|},
\end{align*}
therefore by the triangle inequality we deduce
\begin{equation*}
|\cK(x,y)|\geq\left(1-\frac{b}{3}\right)\frac{1}{|I|}.
\end{equation*}

\textbf{Case 2.} Assume that $J\in\cD\setminus\cD_{e}$. Then
\begin{equation*}
\cK(x,y)=\frac{\e_{1}c_{1}}{|I_1|}+\sum_{n=1}^{\infty}\frac{\e_{2n+1}c_{2n+1}}{|I_{2n+1}|}.
\end{equation*}
We notice that
\begin{align*}
\left|\sum_{n=1}^{\infty}\frac{\e_{2n+1}c_{2n+1}}{|I_{2n+1}|}\right|\leq\sum_{n=1}^{\infty}\frac{b}{|I_{2n+1}|}=\sum_{n=1}^{\infty}\frac{b}{2^{2n}|I_1|}=\frac{b}{3|I_1|},
\end{align*}
therefore by the triangle inequality we deduce
\begin{equation*}
|\cK(x,y)|\geq\left(1-\frac{b}{3}\right)\frac{1}{|I_1|}=\frac{1}{2}\left(1-\frac{b}{3}\right)\frac{1}{|I|},
\end{equation*}
concluding the proof.

\subsection{A question} The non-degeneracy condition \eqref{non-degeneracy condition} might be considered a bit too strong from a more general point of view, and especially from the point of view of \cz operators. A far weaker and perhaps more natural non-degeneracy condition is the following. There exists some $c>0$ (depending on $i$ and $j$), such that for all $I\in\cD$, for all $K\in\ch_{i+1}(I)$, there exists \emph{some} $L\in\ch_{j+1}(I)$ (depending on $K$), such that
\begin{equation*}
|a^{I}\ci{KL}|\geq\frac{1}{c|I|}.
\end{equation*}
This is equivalent to saying that for all $y\in\R$ and for all dyadic intervals $I$ containing $y$, there exists $x\in I$, such that $I$ is the minimal dyadic interval containing both $x$ and $y$ and $|\cK(x,y)|\geq\frac{1}{c|I|}$. This is the direct dyadic analog of the non-degeneracy condition for \cz operators considered by Hyt\ddoto nen in \cite{Hyt}, where it is shown that it is sufficient for some lower BMO bounds for commutators with \cz operators. Note that the proofs of lower BMO bounds in \cite{Hyt} depend heavily on variants of weak factorization. It is not immediately clear to us whether our methods can be adapted to cover shifts that satisfy only this much weaker non-degeneracy condition.

\end{document}